\newtheorem{thm}{Theorem}[section]
\newtheorem{mainthm}{Theorem}
\newtheorem{lem}[thm]{Lemma}
\newtheorem{prop}[thm]{Proposition}
\newtheorem{cor}[thm]{Corollary}
\theoremstyle{definition}
\newtheorem{defn}[thm]{Definition}
\newtheorem{rem}[thm]{Remark}
\newtheorem{exam}[thm]{Example}
\newtheorem{ques}[thm]{Question}
\newcommand{\Star}[1]{\mathrm{St}(#1)}
\newcommand{\Link}[1]{\mathrm{Lk}(#1)}
\newcommand{\gen}[2]{\pi^{#1}_{#2}}
\newcommand{\genprod}[3]{\delta^{#1}_{#2, #3}}
\newcommand{\inner}[1]{\iota^{#1}}
\newcommand{\partialconjugations}{\mathcal{P}}
\newcommand{\rank}{\mathrm{rk}}
\begin{document}
\pagestyle{plain}

\title{The Bieri-Neumann-Strebel Invariant of the Pure Symmetric Automorphisms of a Right-Angled {A}rtin Group}

\author[Koban]{Nic~Koban}
\address{Dept.~of Math., University of Maine Farmington, Farmington ME 04938}
\email{nicholas.koban@maine.edu}

\author[Piggott]{Adam~Piggott}
\address{Dept.~of Math., Bucknell University, Lewisburg PA 17837}
\email{adam.piggott@bucknell.edu}

\date{\today}

\begin{abstract}
We compute the BNS-invariant for the pure symmetric automorphism groups of right-angled Artin groups.  We use this calculation to show that the pure symmetric automorphism group of a right-angled Artin group is itself not a right-angled Artin group provided that its defining graph contains a separating intersection of links.
\end{abstract}

\subjclass[2010]{20F65}
\keywords{right-angled Artin groups, pure symmetric automorphisms, BNS-invariant}
\maketitle

\section{Introduction}
In 1987, the Bieri-Neumann-Strebel (BNS) geometric invariant $\Sigma^1(G)$ was introduced for a discrete group $G$.  The invariant is an open subset of the character sphere $S(G)$ which carries considerable algebraic and geometric information. It determines whether or not a normal subgroup with abelian quotient is finitely generated; in particular, the commutator subgroup of $G$ is finitely generated if and only if $\Sigma^1(G) = S(G)$.  If $M$ is a smooth compact manifold and $G = \pi_1(M)$, then $\Sigma^1(G)$ contains information on the existence of circle fibrations of $M$.  Additionally, if $M$ is a $3$-manifold, then $\Sigma^1(G)$ can be described in terms of the Thurston norm.   Other aspects of the rich theory of BNS-invariant can be found in \cite{bieri}.

Although $\Sigma^1(G)$ has proven quite difficult to compute in general, it has been computed in the case that $G$ is a right-angled Artin group  \cite{meier}, and in the case that $G$ is the pure symmetric automorphism group of a free group \cite{orlandi}.  In the present article we generalize the result of \cite{orlandi} by computing $\Sigma^1(G)$ when $G$ is the pure symmetric automorphism group of a right-angled Artin group. The outcome of the computation is recorded in Theorem \ref{Theorem:MainA}, to be found in \S~\ref{main-a} below.

We also provide an application of our computation.  It was shown in \cite{charney} that if $A$ is the right-angled Artin group determined by a graph $\Gamma$ that has no separating intersection of links (no SILS), then the corresponding group of pure symmetric automorphisms $P\Sigma(A)$ is itself a right-angled Artin group.  We prove the converse by observing that when $\Gamma$ has a SIL, the BNS-invariant of $P\Sigma(A)$ does not have a certain distinctive property that the BNS-invariant of a right-angled Artin group must satisfy.  Thus we prove:

\setcounter{mainthm}{1}
\begin{mainthm}\label{Theorem:MainB}
The group $P\Sigma(A)$ is isomorphic to a right-angled Artin group if and only if the defining graph $\Gamma$ contains no SILs.
\end{mainthm}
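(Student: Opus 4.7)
The ``if'' direction is precisely the theorem of \cite{charney} cited in the introduction. For the converse, the plan is to combine Theorem~\ref{Theorem:MainA} with a symmetry property enjoyed by every right-angled Artin group but, as we shall show, not by $P\Sigma(A)$ when $\Gamma$ contains a SIL. Recall that for any RAAG $A_\Gamma$, the Meier--VanWyk description \cite{meier} of $\Sigma^1(A_\Gamma)$ is phrased in terms of the ``living subgraph'' cut out by the support of a character $\chi$, so the description depends only on the support and not on the signs of the values. Consequently $[\chi] \in \Sigma^1(A_\Gamma)$ if and only if $[-\chi] \in \Sigma^1(A_\Gamma)$, and it suffices to produce, whenever $\Gamma$ has a SIL, a character $\chi : P\Sigma(A) \to \Real$ with $[\chi] \in \Sigma^1(P\Sigma(A))$ but $[-\chi] \notin \Sigma^1(P\Sigma(A))$.

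Given a SIL in $\Gamma$---non-adjacent vertices $v_1, v_2$ and a set $Z \subseteq \Link{v_1} \cap \Link{v_2}$ whose star separates $v_1$ from $v_2$---I would define $\chi$ to be positive on each partial conjugation $\gen{v_1}{x}$ with $x$ lying in the component of $\Gamma \setminus \Star{Z}$ containing $v_2$, and zero on all other standard generators of $P\Sigma(A)$. Morally $\chi$ records a single direction of conjugation (pushing letters of $v_2$'s component outward by $v_1$), while $-\chi$ demands the reverse direction and so would require generators acting by $v_2$---generators conspicuously absent from the support of $\chi$.

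I would then verify each claim against the combinatorial criterion of Theorem~\ref{Theorem:MainA}. Membership of $[\chi]$ in $\Sigma^1(P\Sigma(A))$ should be the easier half: the support of $\chi$ already assembles into the connected configuration the criterion demands. Non-membership of $[-\chi]$ is where the SIL hypothesis does real work: because $\Star{Z}$ genuinely separates $v_1$ from $v_2$, no sequence of generators in the support of $-\chi$ can bridge the two sides of $Z$ to produce the connected configuration required. I expect this second verification to be the main technical obstacle, as care must be taken to confirm that the inner automorphisms $\inner{v}$ and any additional generators or relations appearing in Theorem~\ref{Theorem:MainA} do not accidentally restore symmetry; the defining conditions of a SIL, in particular $v_1 \not\sim v_2$ and $v_1, v_2 \notin Z$, should be precisely what prevents this.
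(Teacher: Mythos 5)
Your ``if'' direction is fine (it is exactly \cite[Theorem 3.6]{charney}), but your plan for the converse fails at its central step. The symmetry property you propose to violate --- $[\chi]\in\Sigma^1$ iff $[-\chi]\in\Sigma^1$ --- is indeed enjoyed by every right-angled Artin group, but it is \emph{also} enjoyed by $P\Sigma(A)$ for every defining graph $\Gamma$, SIL or no SIL. This is visible directly from Theorem~\ref{Theorem:MainA}: whether $[\chi]$ lies in $\Sigma^c$ is determined entirely by the set $\mathcal{H}$ of $\chi$-hyperbolic partial conjugations (i.e.\ the support of $\chi$ on the generating set $\partialconjugations$) together with the condition that each $\inner{a}$ be $\chi$-elliptic, and both of these data are unchanged when $\chi$ is replaced by $-\chi$. (Concretely, $\chi(\inner{a})$ is the sum of the values $\chi(\gen{a}{K})$ over the components $K$ of $\Gamma\setminus\Star{a}$, and this vanishes for $\chi$ iff it vanishes for $-\chi$.) So the character you intend to construct --- with $[\chi]\in\Sigma^1(P\Sigma(A))$ but $[-\chi]\notin\Sigma^1(P\Sigma(A))$ --- cannot exist, and antipodal asymmetry can never detect that $P\Sigma(A)$ is not a RAAG. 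In short, you have chosen an invariant that does not separate the two classes of groups; the ``main technical obstacle'' you anticipate is not merely difficult but impossible.

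The paper's obstruction is of a different, quantitative kind. One first shows (Lemma~\ref{Lem:CountingDimensions}) that for a RAAG the maximal \emph{missing sub-spheres} of $S(A)$ satisfy an inclusion--exclusion identity: the alternating sum of their dimensions equals $\rank(A/[A,A])-\rank(Z(A))-1$ plus the appropriate correction, and these maximal missing sub-spheres are characterized intrinsically as the complement subspaces determined by minimal complement kernels, so the identity is an isomorphism invariant. For $P\Sigma(A)$, the complement subspaces coming from maximal p-sets \emph{alone} already saturate this identity (Lemma~\ref{Lem:CountingInPSigmaA}); hence if $P\Sigma(A)$ were a RAAG there could be no further maximal missing sub-spheres, i.e.\ no type II characters in $\Sigma^c$ (Corollary~\ref{Cor:NoTypeII}). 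But a SIL with vertices $a,b$ and component $M$ produces the $\delta$-p-set $\{\gen{a}{K},\gen{a}{M},\gen{b}{L},\gen{b}{M}\}$ and therefore a type II character in $\Sigma^c$, a contradiction. If you want to salvage your write-up, you must replace the symmetry argument by a property of this sort --- one that genuinely distinguishes the shape of $\Sigma^c$ (here, the extra ``diagonal'' missing directions forced by $\delta$-p-sets) rather than its behavior under the antipodal map.
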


Theorem~\ref{Theorem:MainB} is indicative of a dichotomy within the family of groups $\{P\Sigma(A)\}$ determined by whether or not $\Gamma$ has a SIL.  Certain algebraic manifestations of this dichotomy were proved in \cite{gutierrez}.  It would be interesting to understand more geometric manifestations.  Since right-angled Artin groups are CAT(0) groups, we are lead to ask the following question:

\begin{ques}\label{Question:SILsImplyCAT(0)}
If the defining graph $\Gamma$ contains a SIL, is $P\Sigma(A)$ a CAT(0) group?
\end{ques}

This paper is organized as follows: in \S~\ref{BNS} and \S~\ref{psa}, we define the BNS-invariant $\Sigma^1(G)$ and the pure symmetric automorphism group, respectively, and record some useful facts which inform the arguments to follow.  We prove Theorem~\ref{Theorem:MainA} in \S~\ref{main-a}.  This proof involves two cases with the first handled in \S~\ref{typeI} and the second in \S~\ref{typeII}.  In \S~\ref{sils}, we prove Theorem~\ref{Theorem:MainB}.


 \section{The BNS-invariant}\label{BNS}

Let $G$ be a finitely generated group. A {\it character} $\chi$ of $G$ is a homomorphism from $G$ to the additive reals.  The set of all characters of $G$, denoted Hom$(G,\mathbb{R})$, is an $n$-dimensional real vector space where $n$ is the $\mathbb{Z}$-rank of the abelianization of $G$.  Two non-zero characters $\chi_1$ and $\chi_2$ are equivalent if there is a real number $r > 0$ such that $\chi_1 = r\chi_2$.  The set of equivalence classes $S(G) = \{ [\chi]~|~\chi \in {\rm Hom}(G,\mathbb{R}) - \{0\} \}$ is called the {\it character sphere of $G$}, and this is homeomorphic to an $(n-1)$-dimensional sphere. The BNS invariant $\Sigma^1(G)$, a subset of $S(G)$, may be described in terms of either the geometry of Cayley graphs (see \cite{bieri}), or $G$-actions on $\mathbb{R}$-trees (see \cite{Brown}).  For our purposes the latter is more convenient, and we now describe $\Sigma^1(G)$ from that point of view.

Suppose $G$ acts by isometries on an $\mathbb{R}$-tree,
$T$, and let $\ell:G \to \mathbb{R}^+$ be the corresponding length function.  For each $g \in G$,
let $C_g$ be the characteristic subtree of $g$.  If $\ell(g) = 0$, then $g$ is elliptic, and $C_g$ is its fixed point set;
if $\ell(g) \neq 0$, then $g$ is hyperbolic, and $C_g$ is the axis of $g$. The action is {\it non-trivial} if at least one element of $G$ is hyperbolic, and \emph{abelian} if every element of $[G,G]$ is elliptic.   A non-trivial abelian action on an $\mathbb{R}$-tree must fix either one or two ends of the tree, and is considered {\it exceptional} if it fixes only one end.  To each non-trivial abelian action, and each fixed end $e$, we associate the character $\chi$ such that $|\chi(g)| = \ell(g)$, and $\chi(g)$ is positive if and only if $g$ is a hyperbolic isometry which translates its axis away from the fixed end $e$.  
We say $g$ is \emph{$\chi$-elliptic} if $\chi(g)=0$, and \emph{$\chi$-hyperbolic} otherwise.

We are now able to give Brown's formulation of $\Sigma^1(G)$: An equivalence class $[\chi] \in S(G)$ is contained in $\Sigma^1(G)$ unless there exists an $\mathbb{R}$-tree $T$ equipped with an exceptional non-trivial abelian $G$-action associated to $\chi$.

To demonstrate that $[\chi] \in \Sigma^1(G)$, it suffices to show that in any $\mathbb{R}$-tree $T$ equipped
with a non-trivial abelian $G$-action associated to $\chi$, there exists a line $X$ such that $X \subseteq C_g$ for all $g \in G$.
For this purpose, the following facts about characteristic subtrees are invaluable (see  \cite{orlandi}:
\begin{description}
\item [Fact A] If $[g, h]=1$ and $h$ is hyperbolic, then $C_h \subseteq C_g$.
\item [Fact B] If $[g, h]=1$, then $C_g \cap C_h \subseteq C_{gh}$.
\end{description}
Essentially, we work with a fixed finite generating set of $G$, we consider an arbitrary non-trivial abelian $G$-action on an arbitrary $\mathbb{R}$-tree $T$, we let $X \subseteq T$ denote the axis of one $\chi$-hyperbolic generator $s$, and we use Facts A and B to demonstrate that $X \subseteq C_t$ for every other generator $t$.  For this approach to be successful we typically need a sufficient number of commuting relations in $G$.

To demonstrate that $[\chi] \in \Sigma^1(G)^c$, it is often convenient to make use of the following well-known facts.

\begin{lem}\label{Lemma:Complement}
Let $\chi \in {\rm Hom}(G,\mathbb{R})- \{0\}$.  Suppose there is an epimorphism $\phi:G \to H$ and
a character $\psi \in {\rm Hom}(H,\mathbb{R})$ such that $\chi = \psi \circ \phi$.
If $[\psi] \in \Sigma^1(H)^c$, then $[\chi] \in \Sigma^1(G)^c$.
\end{lem}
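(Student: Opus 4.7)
The plan is to start from the hypothesis that $[\psi] \in \Sigma^1(H)^c$, which by Brown's formulation gives us an $\mathbb{R}$-tree $T$ equipped with an exceptional non-trivial abelian $H$-action whose associated character is $\psi$. I would then pull this action back along the epimorphism $\phi$: define the $G$-action on $T$ by $g \cdot x := \phi(g) \cdot x$. The whole proof is simply verifying that this pulled-back action witnesses $[\chi] \in \Sigma^1(G)^c$.

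There are four things to check, each of which reduces immediately to the corresponding property of the $H$-action. First, the action is non-trivial: pick a hyperbolic $h \in H$ (which exists since the $H$-action is non-trivial), lift it to some $g \in G$ with $\phi(g) = h$ via surjectivity of $\phi$, and observe that $g$ acts exactly as $h$ does, so $g$ is hyperbolic. Second, the action is abelian: any element of $[G,G]$ maps under $\phi$ into $[H,H]$, which acts elliptically on $T$, so every commutator in $G$ acts elliptically. Third, the set of ends of $T$ fixed by $G$ equals the set of ends fixed by $H$ (because $G$ and $H$ act through the same isometries of $T$), and by hypothesis this consists of exactly one end $e$; thus the $G$-action is exceptional.

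Finally, I would check that the character associated to the $G$-action with fixed end $e$ is $\chi$ itself. For each $g \in G$, the translation length of $g$ under the $G$-action equals the translation length of $\phi(g)$ under the $H$-action, so $\ell_G(g) = \ell_H(\phi(g)) = |\psi(\phi(g))| = |\chi(g)|$. Moreover $g$ translates its axis away from $e$ if and only if $\phi(g)$ does, and the latter happens exactly when $\psi(\phi(g)) = \chi(g)$ is positive. Hence the character associated to this $G$-action is precisely $\chi$, and so $[\chi] \in \Sigma^1(G)^c$ by Brown's criterion.

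There is no real obstacle here; the lemma is essentially the observation that the definition of $\Sigma^1$ via exceptional actions is functorial under pullback through quotient maps. The only small point to be careful about is keeping track of signs in the last step so as to recover $\chi$ itself (not $-\chi$), which is why matching the choice of fixed end $e$ on both sides matters.
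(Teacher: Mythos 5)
Your proof is correct. The paper states this lemma as a well-known fact and offers no proof of its own, so there is nothing internal to compare against; your argument---pulling the exceptional non-trivial abelian $H$-action on $T$ back along $\phi$, and using surjectivity of $\phi$ so that $G$ and $H$ have the same image in the isometry group of $T$ (hence the same hyperbolic elements, the same fixed ends, and, with the fixed end $e$ matched, associated character $\psi\circ\phi=\chi$)---is exactly the standard verification in Brown's formulation.
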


\begin{cor}\label{Corollary:FreeProduct}
If $A$ and $B$ are non-trivial finitely-generated groups, and $\chi \in {\rm Hom}(G,\mathbb{R})- \{0\}$ factors through an epimorphism $G \to A \ast B$, then $[\chi] \in \Sigma^1(G)^c$.
\end{cor}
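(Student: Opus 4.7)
By Lemma~\ref{Lemma:Complement}, applied to the given epimorphism $G \to A \ast B$, it suffices to show that every non-zero character $\psi$ of $H := A \ast B$ satisfies $[\psi] \in \Sigma^1(H)^c$. Note that one cannot simply reduce further by projection onto a factor: if $\psi|_B = 0$ and $\psi|_A \neq 0$ then $\psi$ does factor through the natural retraction $H \to A$, but the induced character on $A$ may well lie in $\Sigma^1(A)$ (already true when $A = \mathbb{Z}$). So a genuinely free-product argument is required.

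The plan is to apply Brown's criterion: exhibit an $\mathbb{R}$-tree $T$ with a non-trivial, abelian, exceptional $H$-action whose associated character is $\psi$. By symmetry I assume $\psi|_A \neq 0$. I would construct $T$ by blowing up each vertex of the Bass--Serre tree $T_0$ of the splitting $H = A \ast_{\{1\}} B$ to a copy of $\mathbb{R}$ on which the stabilizer acts by translations via $\psi$; the trivial-stabilizer edges of $T_0$ are attached to these vertex spaces at equivariantly consistent points. On the resulting tree, commutators act trivially on each individual vertex space (since each factor acts by translations) and hence with zero translation length, so the action is abelian; some generator of $A$ is hyperbolic, so the action is non-trivial; and additivity of $\psi$ along free-product normal forms gives translation length function $|\psi(\,\cdot\,)|$, so the associated character is indeed $\psi$.

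The main obstacle is ensuring that the action is exceptional, i.e.\ that it fixes exactly one end. The naive blow-up allows $H$ to permute the two ends of each real-line vertex space and may yield no globally fixed end at all. I would remedy this with an equivariant Busemann-style identification that collapses the positive-$\psi$ rays of the various vertex spaces into a single common ray; this identification preserves translation lengths and the abelian property while producing a unique $H$-fixed end in the positive-$\psi$ direction, the negative-$\psi$ ends surviving as infinitely many distinct ends permuted by $H$. Once exceptionality is verified, Brown's criterion yields $[\psi] \in \Sigma^1(H)^c$, whence $[\chi] \in \Sigma^1(G)^c$ by Lemma~\ref{Lemma:Complement}.
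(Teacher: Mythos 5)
Your first step---using Lemma~\ref{Lemma:Complement} to reduce to the claim that $\Sigma^1(A\ast B)=\emptyset$---is exactly the paper's reduction; the paper then simply cites that claim as a known fact, whereas you attempt to reprove it, and the attempted proof has a genuine gap. The naive blow-up you describe (replace each vertex of the Bass--Serre tree by a line on which its stabilizer acts by translations via $\psi$, attach the trivial-stabilizer edges at equivariantly chosen points) does \emph{not} have length function $|\psi(\cdot)|$ and is \emph{not} abelian: in such a tree the translation length of $g$ is computed by summing $|\psi(\text{syllable})|$ along the normal form of $g$, not by taking $|\psi(g)|$. Concretely, for $A=\langle a\rangle$, $B=\langle b\rangle$ and $\psi(a)=\psi(b)=1$, one checks directly that $d(x,(ab^{-1})^n x)=2n$ for a base point $x$ on the $A$-line, so $ab^{-1}\in\ker\psi$ is hyperbolic of translation length $2$, and similarly $[a,b]$ is hyperbolic. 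Your justification that ``commutators act trivially on each individual vertex space'' does not parse, since a commutator does not stabilize any vertex space at all. Consequently the ``Busemann-style identification'' cannot be a cosmetic repair that ``preserves translation lengths and the abelian property'': it must \emph{change} translation lengths (it has to make $ab^{-1}$ elliptic), and it is in fact the entire construction. Nothing in the sketch verifies that the identified space is an $\mathbb{R}$-tree, that its length function is $|\psi|$, that $[H,H]$ becomes elliptic, or that exactly one end is fixed; these verifications are precisely the content of the statement being proved.

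There are two clean ways to close the gap. The shortest is the paper's: quote the standard fact that a nontrivial free product of finitely generated groups has empty BNS-invariant, and finish with Lemma~\ref{Lemma:Complement}. If you want a self-contained argument, either build the tree correctly from the start---glue the vertex lines along their positive-$\psi$ rays, so the resulting $\mathbb{R}$-tree carries a height function $\beta$ with $\beta(gx)=\beta(x)+\psi(g)$; then the distinguished positive end is fixed, translation lengths equal $|\psi(\cdot)|$, all of $\ker\psi\supseteq[H,H]$ is elliptic, and one checks no other end is fixed by all of $H$---or avoid trees altogether and use the Cayley-graph formulation: pick $c$ in one factor with $\psi(c)<0$ and a nontrivial $b$ in the other factor with $\psi(b)\ge 0$ (replacing $b$ by $b^{-1}$ if necessary); then $c^{n}bc^{-n}$ lies in the subgraph where $\psi\ge 0$, but every path from $1$ to it passes through the cut vertex $c^{n}$, where $\psi<0$, so that subgraph is disconnected and $[\psi]\notin\Sigma^1(A\ast B)$.
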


\begin{proof}
This follows from Lemma \ref{Lemma:Complement}, and the fact that $\Sigma^1(A \ast B) = \emptyset$.
\end{proof}

 \section{Right-angled Artin groups and their pure symmetric automorphisms}\label{psa}

Throughout we fix a simplicial graph $\Gamma$, with vertex set $V$ and edge set $E$.  For each vertex $a \in V$, the \emph{link of $a$} is the set
$\Link{a} = \{b \in V \; | \; \{a, b\} \in E\}$, and the \emph{star of $a$} is the set $\Star{a} = \Link{a} \cup \{a\}$.  For a set of vertices $W \subseteq V$, we write $\Gamma \setminus W$ for the full subgraph spanned by the
vertices in $V \setminus W$.

Let $A = A(\Gamma)$ denote the right-angled Artin group determined by $\Gamma$.
We shall not distinguish between the vertices of $\Gamma$ and the generators of $A$, thus
$A$ is the group presented by
$$\langle V \; | \; ab = ba \text{ for all } a, b \in V \text{ such that } \{a, b\} \in E \rangle.$$

For each vertex $a \in V\setminus Z$, and each connected component $K$ of $\Gamma \setminus \Star{a}$, the map
$$v \mapsto \left\{
                            \begin{array}{ll}
                              a^{-1} v a & \text{if } v \in K,\\
                              v & \text{if } v \in V\setminus K, \\
                            \end{array}
                          \right.
$$ extends to an automorphism $\gen{a}{K}\!:\! A \to A$. We say $\gen{a}{K}$ is the \emph{partial conjugation (of $A$) with acting letter
$a$ and domain $K$}.  We write $\partialconjugations$ for the set comprising the partial conjugations.

The \emph{pure symmetric automorphism group}, $P\Sigma(A)$, comprises those automorphisms $\alpha\!:\! A \to A$ which map each vertex to a conjugate of itself. Laurence proved that $P\Sigma(A)$ is generated by $\partialconjugations$ \cite{Laurence}.

We let $Z = \{a \in V \; | \; \Star{a} = V\}$,
and we may assume $Z \neq \emptyset$ for the following reason: it follows immediately from Laurence's result, together with the observation that enriching $\Gamma$ with a new vertex $w$ adjacent to all other vertices does not introduce new partial conjugations, and does not change the domain of any existing partial conjugation.   Let $d\!:\! V \times V \to \{0, 1, 2\}$ denote the
combinatorial metric on $V$.

We now record three results, paraphrased from existing literature, which make working with $\partialconjugations$ tractable.
A proof of the first is included because it is so brief; the second follows immediately from the first.

\begin{lem}\label{Lemma:TwoGens}\cite[Lemma 4.3]{gutierrez}
If $\gen{a}{K}, \gen{b}{L} \in \partialconjugations$ and $d(a, b) = 2$ and $b \not \in K$, then either $K \cap L = \emptyset$ or $K \subseteq L$.
\end{lem}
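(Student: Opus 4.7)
The plan is to show the stronger statement that $K \subseteq V \setminus \Star{b}$, after which connectedness of $K$ immediately forces $K$ to lie in a single connected component of $\Gamma \setminus \Star{b}$, and any shared vertex with $L$ pins down that component.

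First I would exploit the hypothesis $d(a,b) = 2$ to observe that $b \not\in \Star{a}$, so $b$ itself lies somewhere in $\Gamma \setminus \Star{a}$ (in some connected component different from $K$, by the assumption $b \not\in K$). The key step is then to rule out adjacencies: suppose for contradiction that some $x \in K$ lies in $\Link{b}$. Then $x$ and $b$ are joined by an edge in $\Gamma$, and neither $x$ nor $b$ lies in $\Star{a}$ (for $x$ this is because $x \in K \subseteq \Gamma \setminus \Star{a}$, and for $b$ this is the $d(a,b)=2$ observation just made). Hence $x$ and $b$ belong to the same connected component of $\Gamma \setminus \Star{a}$, forcing $b \in K$, contradicting the hypothesis. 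Therefore $K \cap \Link{b} = \emptyset$, and combined with $b \not\in K$ we obtain $K \cap \Star{b} = \emptyset$, i.e.\ $K \subseteq V \setminus \Star{b}$.

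To finish, I would use that $K$ is connected in $\Gamma$ (as a connected component of an induced subgraph). Since $K$ is a connected subset of $\Gamma \setminus \Star{b}$, it is contained in a single connected component of $\Gamma \setminus \Star{b}$. If $K \cap L = \emptyset$, we are done; otherwise the connected component containing $K$ must be exactly $L$, so $K \subseteq L$, which gives the dichotomy.

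I do not expect a serious obstacle here: the entire argument is a short combinatorial dichotomy driven by the fact that an edge between $K$ and a vertex outside $\Star{a}$ would enlarge the connected component $K$. The only point that requires care is making sure $b \not\in \Star{a}$ before invoking the connected-component argument, which is exactly where the hypothesis $d(a,b) = 2$ (as opposed to $d(a,b) \geq 1$) is used.
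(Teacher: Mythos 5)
Your proof is correct and is essentially the paper's argument: both hinge on the observation that a vertex of $K$ lying in $\Star{b}$ would force $b \in K$ (since $b \notin \Star{a}$), combined with the connectedness of $K$. You merely package it directly (showing $K \cap \Star{b} = \emptyset$, so $K$ sits in a single component of $\Gamma \setminus \Star{b}$) where the paper runs a path from $K \cap L$ to $K \setminus L$ and derives the same contradiction.
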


\begin{proof}
Assume $\gen{a}{K}, \gen{b}{L} \in \partialconjugations$ and $d(a, b) = 2$ and $b \not \in K$.  For the sake of contradiction, suppose $\emptyset \neq K \cap L \neq K$.
Let $u \in K \cap L$ and $v \in K \setminus L$.  Since $K$ is connected, there exists a path $\alpha$ in $K$ from $u$ to $v$.  Since $u \in L$ and $v \not \in L$, $\alpha$ passes
through a vertex $w \in \Star{b}$.  Since $d(b, w) \leq 1$ and $w \in K$ and $b \in \Gamma \setminus \Star{a}$, $b \in K$---a contradiction.
\end{proof}

\begin{lem}\label{Lemma:PCCases}\cite[Corollary 4.4 and Lemma 4.7]{gutierrez}
For each pair of partial conjugations $(\gen{a}{K}, \gen{b}{L}) \in \partialconjugations \times \partialconjugations$, exactly one of the following six cases holds:
\begin{enumerate}
\item [(1)] $d(a, b) \leq 1$;
\item [(2)] $d(a, b) = 2$, $a \in L$, and $b \in K$;
\item [(3)] $d(a, b) = 2$, $K \cap L = \emptyset$, and either $a \in L$ or $b \in K$;
\item [(4)] $d(a, b) = 2$, and either $\{a\} \cup K \subset L$ or $\{b\} \cup L \subset K$;
\item [(5)] $d(a, b) = 2$, and $\bigl(\{a\} \cup K\bigr) \cap (\{b\} \cup L) = \emptyset$;
\item [(6)] $d(a, b) = 2$, and $K = L$.
\end{enumerate}
The relation $[\gen{a}{K}, \gen{b}{L}] = 1$ holds only in the cases (1), (4) and (5).
\end{lem}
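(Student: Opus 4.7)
The plan is to establish the six-case partition first and then determine, case by case, which configurations yield commuting partial conjugations.

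For the enumeration, note that because $Z \neq \emptyset$ the metric $d$ on $V$ takes values in $\{0, 1, 2\}$; case~(1) absorbs all pairs with $d(a,b) \leq 1$. When $d(a,b) = 2$, I would branch on the two Boolean conditions ``$a \in L$'' and ``$b \in K$''. The branch in which both hold is exactly case~(2). In each of the other three branches at least one of the hypotheses of Lemma~\ref{Lemma:TwoGens} is available (possibly after swapping the two partial conjugations), and applying it in one or both directions forces one of the outcomes $K \cap L = \emptyset$, strict containment $\{a\} \cup K \subsetneq L$ (or the symmetric $\{b\} \cup L \subsetneq K$), or $K = L$. The strictness in the containment outcome is supplied by $a \notin K$ and $b \notin L$, which hold automatically. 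Matching outcomes to branches then produces cases~(3), (4), (5) and (6).

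For the commutativity in cases~(1), (4), and (5), I would check directly that the two composites agree on each generator. Case~(5) is immediate, since the supports of $\gen{a}{K}$ and $\gen{b}{L}$ are disjoint and neither acting letter lies in the other's domain. In case~(1), either $a = b$ (so $K$ and $L$ are connected components of the same graph, hence equal or disjoint), or $d(a,b) = 1$, in which case $ab = ba$ together with $a \in \Star{b}$ and $b \in \Star{a}$ makes the composites agree on each generator. Case~(4) is the main computation: assuming without loss of generality $\{a\} \cup K \subsetneq L$, one checks the two composites on four strata of generators---vertices in $K$ (conjugated by both $a$ and $b$ in compatible orders), vertices in $L \setminus (K \cup \{a\})$ (conjugated only by $b$), the letter $a$ (fixed by both), and all remaining generators (also fixed by both).

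For non-commutativity in cases~(2), (3), and (6), I would exhibit a witness generator on which the composites diverge. In case~(6), for instance, applying both composites to any $v \in K = L$ yields the two words $b^{-1} a^{-1} v a b$ and $a^{-1} b^{-1} v b a$; since $d(a,b) = 2$, the letters $a$ and $b$ do not commute in $A$, so these words are distinct. Analogous witnesses handle cases~(2) and (3), using the asymmetric action of $\gen{a}{K}$ on $b$ or of $\gen{b}{L}$ on $a$ to produce the discrepancy. The main obstacle, in my view, is the enumeration step: it requires careful bookkeeping to verify that the two possible applications of Lemma~\ref{Lemma:TwoGens} exhaust the six configurations without a gap. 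The commutativity verifications themselves, once the case is fixed, are mechanical.
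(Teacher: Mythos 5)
Your overall route is the one the paper itself points to: the paper gives no proof of this lemma (it is quoted from the reference \cite{gutierrez}), saying only that the case division ``follows immediately'' from Lemma~\ref{Lemma:TwoGens}, and your enumeration---branch on $d(a,b)$, then on the Boolean pair ``$a\in L$'', ``$b\in K$'', and apply Lemma~\ref{Lemma:TwoGens} in one or both directions---is exactly that derivation; settling the commutation claims by evaluating both composites on generators is likewise the standard argument in the cited source. So the skeleton is right. However, several steps are wrong or too quick as written.

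First, in case (4) with $\{a\}\cup K\subset L$ the letter $a$ is \emph{not} fixed by $\gen{b}{L}$: since $a\in L$, it is conjugated by $b$. The two composites do still agree on $a$ (both send $a\mapsto b^{-1}ab$), but only because $\gen{a}{K}$ fixes both $a$ and $b$; that is the computation your stratum should record, and as stated your check of the crucial case rests on a false premise. Second, your non-commutation argument is incomplete: from $ab\neq ba$ alone one cannot conclude that $b^{-1}a^{-1}vab$ and $a^{-1}b^{-1}vba$ are distinct---they coincide exactly when the commutator of $a$ and $b$ centralizes $v$. You need the extra observation that $v\in K=L$ is adjacent to neither $a$ nor $b$, so (e.g.) $\langle a,b,v\rangle$ is a free special subgroup of rank $3$, or invoke that the centralizer of the generator $v$ is $\langle \Star{v}\rangle$ while the commutator has support $\{a,b\}$ disjoint from $\Star{v}$. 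Third, in case (3) the two composites actually agree on both $a$ and $b$ (check it: with $b\in K$, $a\notin L$, $K\cap L=\emptyset$, both send $b\mapsto a^{-1}ba$ and fix $a$), so the witness must be a vertex $v\in L$ (resp.\ $K$), where the discrepancy arises because $\gen{a}{K}$ moves the conjugating letter $b$; your sketch gestures at the right mechanism but the evaluation point has to be pinned down. Finally, a small point in the enumeration: strictness of $\{a\}\cup K\subsetneq L$ is not ``supplied by $a\notin K$ and $b\notin L$''; it holds because $L$ is connected whereas $a$ has no neighbour in $K$, so $\{a\}\cup K$ spans a disconnected subgraph and cannot equal $L$. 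All of these are repairable, but each repair is a concrete missing step in the proposal as written.
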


\begin{thm}\label{Thm:Presentation}\cite[Chapter 3]{toinet}
Every relation between partial conjugations is a consequence of the following relations:
\begin{enumerate}
\item $[\gen{a}{K}, \gen{b}{L}] = 1$ if $(\gen{a}{K}, \gen{b}{L})$ falls into one of the cases (1), (4), (5) of Lemma \ref{Lemma:PCCases};
\item \label{DeltaRelation} $[\gen{a}{K} \gen{a}{L}, \gen{b}{L}] = 1$ if $K \neq L$ and $b \in K$.\end{enumerate}
\end{thm}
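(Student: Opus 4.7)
The proof has two parts: verifying that the listed relations hold in $P\Sigma(A)$, and showing that they exhaust all relations among partial conjugations.

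Part one is largely immediate. The commutation relations in (1) are the content of Lemma~\ref{Lemma:PCCases}. For the delta relations (2), I would verify $[\gen{a}{K}\gen{a}{L}, \gen{b}{L}] = 1$ by evaluating both compositions on every vertex $v \in V$. The vertices partition into the cases $v \in K \cap L$, $v \in K \setminus L$, $v \in L \setminus K$, $v = b$ (recalling $b \in K$), and $v \notin K \cup L \cup \{b\}$. A direct expansion using the definition of a partial conjugation shows both compositions send $v$ to the same element of $A$. The hypothesis $b \in K$ is exactly what is needed so that the inner conjugator produced when $\gen{a}{K}\gen{a}{L}$ acts on $b^{-1}vb$ matches the one that appears when $\gen{b}{L}$ is applied after $\gen{a}{K}\gen{a}{L}$.

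For part two, let $G$ denote the group presented by the asserted relations and let $\phi \colon G \twoheadrightarrow P\Sigma(A)$ be the surjection that Laurence's generating theorem provides. To show $\phi$ is injective, I would impose a total order on $\partialconjugations$, grouping first by acting letter and then by a fixed enumeration of the connected components of $\Gamma \setminus \Star{a}$, and use the given relations to rewrite every word in $G$ into a canonical normal form. The key observation is that a delta relation can be read as a rule for moving a generator $\gen{b}{L}$ past a generator $\gen{a}{K}$ at the cost of introducing the correction $\gen{a}{L}$, which, together with the commutations in (1), should suffice to bring any word into canonical order. Uniqueness of the normal form can then be deduced by evaluating the resulting automorphism on appropriately chosen vertices of $A$ and reading off the partial conjugation data from the resulting inner conjugators.

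The main obstacle I anticipate is the termination and confluence of this rewriting process: each application of a delta relation potentially reintroduces a generator further to the left, so a non-trivial descent invariant (for instance, lexicographic on a word-length/position pair) must be constructed to close the argument. A natural backup strategy is to imitate the geometric approach of McCullough--Miller for $P\Sigma(F_n)$, constructing a simply connected complex on which $P\Sigma(A)$ acts with tractable stabilizers, and reading off the presentation via Brown's presentation-from-action theorem or Bass--Serre theory. The detailed argument is carried out by Toinet in \cite[Chapter 3]{toinet}, whose combinatorial machinery I would follow for the precise setup.
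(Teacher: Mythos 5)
The paper does not prove this statement at all: it is recorded as a quoted result, attributed to \cite[Chapter 3]{toinet}, and the surrounding text explicitly says these facts are ``paraphrased from existing literature,'' with a proof included only for Lemma~\ref{Lemma:TwoGens}. So your ultimate deferral to Toinet for the hard direction is in fact the same treatment the paper gives, and to that extent your proposal is consistent with the source. Your Part one (checking that the listed relations do hold in $P\Sigma(A)$) is routine and correct in outline --- note only that the case $v\in K\cap L$ is vacuous, since $K$ and $L$ are distinct connected components of $\Gamma\setminus\Star{a}$ and hence disjoint, and that the essential computation is the one for $v\in L$, where $b\in K$ forces $\genprod{a}{K}{L}(b)=a^{-1}ba$ and the two inner factors agree.

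However, read as an independent proof, Part two has a genuine gap, and the specific strategy you sketch would not go through as described. The entire content of the theorem is the completeness of the relations, i.e.\ injectivity of the map from the abstractly presented group onto $P\Sigma(A)$, and your normal-form plan does not supply it: the delta relation $[\gen{a}{K}\gen{a}{L},\gen{b}{L}]=1$ is not a rule for moving the single generator $\gen{b}{L}$ past the single generator $\gen{a}{K}$ ``at the cost of a correction'' --- it only lets $\gen{b}{L}$ commute with the product $\gen{a}{K}\gen{a}{L}$, so attempting to sort letters by acting letter generically creates conjugates that are not generators, and you have given no descent invariant, no confluence argument, and no reason the rewriting terminates. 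Moreover, ``uniqueness of the normal form by evaluating the automorphism on vertices'' is close to circular: distinguishing words that map to the same automorphism is precisely the statement being proved. Toinet's actual argument is a substantial peak-reduction analysis in the spirit of McCool and Laurence (on Whitehead-type conjugating automorphisms), not a short rewriting or McCullough--Miller-style complex argument, so your ``backup strategy'' is also not a faithful description of the cited proof. In short: citing \cite{toinet} is exactly what the paper does and is fine; presenting the sketched rewriting argument as a proof is not.
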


It is convenient to introduce notation for certain products of partial conjugations with the same acting letter.  We write $\genprod{a}{K}{L}$ for the product $\gen{a}{K}\gen{a}{L}$, provided $K \neq L$.  We write $\inner{a}$ for the inner automorphism $w \mapsto a^{-1} w a$ for all $w \in A$, and we note $\inner{a}$ is simply the product of all partial conjugations with acting letter $a$.

Next we record some useful facts about the behavior of partial conjugations.

\begin{lem}\label{Lemma:NewPC}
If $\gen{a}{K}, \gen{b}{L} \in \partialconjugations$ are such that $a \not \in L$ and $b \in K$ and $K \cap L = \emptyset$, then $\gen{a}{L} \in \partialconjugations$ and $[\genprod{a}{K}{L}, \gen{b}{L}] = 1$.
\end{lem}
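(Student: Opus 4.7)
The plan is to prove the two assertions in sequence, with the bulk of the argument going into showing $\gen{a}{L} \in \partialconjugations$. Once that is in hand, the commutator relation $[\genprod{a}{K}{L}, \gen{b}{L}] = 1$ is immediate from Theorem~\ref{Thm:Presentation}(2), since $\genprod{a}{K}{L} = \gen{a}{K}\gen{a}{L}$ by definition, $b \in K$ by hypothesis, and $K \neq L$ because $K$ and $L$ are both nonempty while $K \cap L = \emptyset$.

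To see that $\gen{a}{L}$ is a partial conjugation, I would argue that $L$ is a full connected component of $\Gamma \setminus \Star{a}$. Connectedness of $L$ is free since $L$ is a component of $\Gamma \setminus \Star{b}$, so the work splits into two sub-claims. First, I would show $L \cap \Star{a} = \emptyset$: any $c$ in the intersection would lie in $\Link{a}$ (as $a \notin L$), so $a$ would be a neighbor of $c$. But every neighbor in $\Gamma$ of a vertex $c \in L$ must lie in $L \cup \Star{b}$, because $L$ is a full component of $\Gamma \setminus \Star{b}$. Since $a \notin L$ by hypothesis and $a \notin \Star{b}$ (noting $d(a,b) = 2$ because $b \in K \subseteq \Gamma \setminus \Star{a}$), this is impossible.

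Second, I would show that $L$ exhausts its component of $\Gamma \setminus \Star{a}$. If some $v \in (\Gamma \setminus \Star{a}) \setminus L$ were adjacent in $\Gamma$ to a vertex $c \in L$, then the same component argument applied to $L \subseteq \Gamma \setminus \Star{b}$ forces $v \in \Star{b}$; and $v \neq b$ since $c \in L$ is not adjacent to $b$, so $v \in \Link{b}$. Now $v$ and $b$ are adjacent vertices both lying in $\Gamma \setminus \Star{a}$, so $v$ belongs to the component of $\Gamma \setminus \Star{a}$ containing $b$, namely $K$. But then $c$ is adjacent to $v \in K \subseteq \Gamma \setminus \Star{a}$, putting $c$ in the same component as $v$, i.e.\ $c \in K$, contradicting $K \cap L = \emptyset$.

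The main obstacle is the second sub-claim, which requires chaining two separate ``component of $\Gamma \setminus \Star{?}$'' arguments: first pushing $v$ into $\Star{b}$ using that $L$ is a $\Gamma \setminus \Star{b}$-component, then pushing $v$ into $K$ using that $b \in K$ is a $\Gamma \setminus \Star{a}$-component. The rest is bookkeeping and an appeal to the presentation in Theorem~\ref{Thm:Presentation}.
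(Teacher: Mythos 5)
Your proof is correct, but it takes a different route from the paper's. The paper gets $\gen{a}{L} \in \partialconjugations$ by letting $K'$ be the component of $\Gamma \setminus \Star{a}$ meeting $L$ and then invoking the classification of Lemma~\ref{Lemma:PCCases}: since $d(a,b)=2$, $a \notin L$, $b \notin K'$ and $K' \cap L \neq \emptyset$, only case (6) can hold, forcing $K' = L$. You instead bypass Lemma~\ref{Lemma:PCCases} entirely and verify from the definitions that $L$ is itself a full component of $\Gamma \setminus \Star{a}$: first $L \cap \Star{a} = \emptyset$ (a neighbor of a vertex of $L$ lies in $L \cup \Star{b}$, and $a$ lies in neither), then no vertex of $(\Gamma \setminus \Star{a}) \setminus L$ is adjacent to $L$ (such a vertex would lie in $\Link{b}$, hence in $K$, dragging a vertex of $L$ into $K$ and contradicting $K \cap L = \emptyset$). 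This is more elementary and self-contained, in the spirit of the path argument proving Lemma~\ref{Lemma:TwoGens}, and it has the side benefit of explicitly justifying $L \cap \Star{a} = \emptyset$, which the paper leaves implicit when it asserts the existence of the component $K'$ meeting $L$; the paper's version is shorter because it leans on the imported case analysis. Both arguments finish identically, citing relation (2) of Theorem~\ref{Thm:Presentation} with $b \in K$ and $K \neq L$.
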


\begin{proof}
Assume $\gen{a}{K}, \gen{b}{L} \in \partialconjugations$ are such that $a \not \in L$ and $b \in K$ and $K \cap L = \emptyset$.  Let $K'$ denote the connected component of $\Gamma \setminus \Star{a}$ such that $K' \cap L \neq \emptyset$.
Since $d(a, b) = 2$ and $a \not \in L$ and $b \not \in K'$ and $K' \cap L \neq \emptyset$, the pair $(\gen{a}{K'}, \gen{b}{L})$ falls into case (6) of Lemma \ref{Lemma:PCCases}.  Thus $K' = L$. The relation $[\genprod{a}{K}{L}, \gen{b}{L}] = 1$ is (2) in Theorem~\ref{Thm:Presentation}.
\end{proof}

\begin{cor}\label{Cor:CommutingRuleForInners}
If $a \in V \setminus Z$ and $\gen{b}{L} \in \partialconjugations$, then $[\inner{a}, \gen{b}{L}]=1$ if and only if  $a \not \in L$.
\end{cor}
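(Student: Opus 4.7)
My plan is to verify the biconditional by direct computation of the two compositions on each vertex $v \in V$, rather than attempting a factor-by-factor analysis of the product decomposition $\inner{a} = \prod_K \gen{a}{K}$ against $\gen{b}{L}$. The reason is that individual partial conjugations $\gen{a}{K}$ in that product may fail to commute with $\gen{b}{L}$---for instance, when $K = L$ the pair falls into case~(6) of Lemma~\ref{Lemma:PCCases}, which is \emph{not} a commuting case---even though the full product $\inner{a}$ does commute; such local failures only cancel out after accumulating the entire product.

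For the direction $a \notin L \Rightarrow [\inner{a}, \gen{b}{L}] = 1$, the key observation is that $\gen{b}{L}$ fixes both $a$ (by hypothesis) and $b$ (by definition of $L$), so $\gen{b}{L}(a^{\pm 1}) = a^{\pm 1}$. For a vertex $v \notin L$ one has
\[
(\gen{b}{L} \circ \inner{a})(v) = \gen{b}{L}(a^{-1} v a) = a^{-1} v a = \inner{a}(v) = (\inner{a} \circ \gen{b}{L})(v),
\]
while for $v \in L$, using that $\inner{a}$ is a homomorphism,
\[
(\inner{a} \circ \gen{b}{L})(v) = a^{-1}(b^{-1} v b) a = a^{-1} b^{-1} v b a = \gen{b}{L}(a^{-1} v a) = (\gen{b}{L} \circ \inner{a})(v).
\]
These two cases exhaust $V$, so the automorphisms agree.

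For the direction $a \in L \Rightarrow [\inner{a}, \gen{b}{L}] \ne 1$, evaluate both compositions at $a$: since $\inner{a}(a) = a$ we get $(\gen{b}{L} \circ \inner{a})(a) = b^{-1} a b$, while $(\inner{a} \circ \gen{b}{L})(a) = \inner{a}(b^{-1} a b) = a^{-1} b^{-1} a b a$. To certify that these differ in $A$, note that $a \in L \subseteq \Gamma \setminus \Star{b}$ forces $a$ and $b$ to be distinct non-adjacent vertices, so the parabolic subgroup $\langle a, b \rangle$ is free of rank two. The ratio $a^{-1} b^{-1} a b a \cdot (b^{-1} a b)^{-1} = a^{-1} b^{-1} a b a b^{-1} a^{-1} b$ is a length-$8$ word in this free subgroup whose consecutive letters alternate between the $a$- and $b$-generators, hence it is freely reduced and therefore non-trivial.

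The main obstacle is psychological rather than technical: one is tempted to route everything through Lemma~\ref{Lemma:PCCases} and Theorem~\ref{Thm:Presentation}, which obscures the calculation. Once one commits to the direct vertex-by-vertex approach, and uses only that $\inner{a}$ is an inner automorphism together with the standard fact that two non-adjacent vertices of a right-angled Artin group generate a rank-$2$ free subgroup, the argument is routine.
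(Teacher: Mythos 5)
Your proof is correct, and it takes a genuinely different route from the paper. The paper states this as a corollary with no written proof, the intended derivation being combinatorial: decompose $\inner{a}$ as the product of all partial conjugations with acting letter $a$ and invoke the machinery already set up --- Lemma~\ref{Lemma:PCCases}, Lemma~\ref{Lemma:NewPC}, and in particular relation~(\ref{DeltaRelation}) of Theorem~\ref{Thm:Presentation}, which is exactly what rescues the ``if'' direction when some factor $\gen{a}{K}$ fails to commute with $\gen{b}{L}$ individually. You instead verify the identity of automorphisms vertex-by-vertex in $A$ itself, and you correctly flag the subtlety that motivates this choice: a factor-by-factor argument genuinely breaks down, since when $K=L$ (case~(6)) or when $b\in K$ with $a\notin L$ (case~(3)) the individual factors do not commute with $\gen{b}{L}$, and only the full product does. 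Your computation in both directions checks out: for $a\notin L$ the two compositions agree on $v$ whether or not $v\in L$, using only that $\gen{b}{L}$ fixes $a$ and $b$; for $a\in L$ the two compositions evaluated at $a$ give $b^{-1}ab$ and $a^{-1}b^{-1}aba$, and the quotient $a^{-1}b^{-1}abab^{-1}a^{-1}b$ is an alternating, hence reduced, word in the rank-two free parabolic subgroup $\langle a,b\rangle$ (a standard fact about right-angled Artin groups not proved in the paper, but legitimate to cite). What your approach buys is self-containedness and, in particular, a clean treatment of the ``only if'' direction, which is awkward to extract from the presentation-theoretic route alone (Theorem~\ref{Thm:Presentation} certifies which relations hold among partial conjugations, and Lemma~\ref{Lemma:PCCases} addresses pairs of partial conjugations, not a product like $\inner{a}$ against a single generator); what the paper's route buys is economy, since the combinatorial lemmas are already in place and get reused heavily in \S~\ref{main-a}.
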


 \section{The BNS-invariant of $P\Sigma(A)$}\label{main-a}

Throughout this section we consider an arbitrary non-trivial character $\chi\!:\! P\Sigma(A) \to \mathbb{R}$.  We write $\Sigma$ for $\Sigma^1(P\Sigma(A))$, and $\Sigma^c$ for the complement of $\Sigma$ in $S(P\Sigma(A))$.

\begin{lem}\label{Lemma:AtMostTwo}
Let $\gen{a}{K}, \gen{a}{L} \in \partialconjugations$ with $K \neq L$.
If $\gen{a}{K}, \gen{a}{L}$ and $\genprod{a}{K}{L}$ are $\chi$-hyperbolic, then $[\chi] \in \Sigma$.
\end{lem}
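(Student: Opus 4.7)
The plan is to use the line argument described in \S~\ref{BNS}. Let $T$ be an arbitrary $\mathbb{R}$-tree equipped with a non-trivial abelian $P\Sigma(A)$-action associated to $\chi$. The three elements $\gen{a}{K}, \gen{a}{L}, \genprod{a}{K}{L}$ pairwise commute (they lie in the abelian subgroup $\langle \gen{a}{K}, \gen{a}{L} \rangle$) and are $\chi$-hyperbolic by hypothesis, so Fact A forces them to share a common axis, which I call $X$. It suffices to show $X \subseteq C_{\gen{b}{M}}$ for every partial conjugation $\gen{b}{M} \in \partialconjugations$, since then $X$ is a common invariant line, certifying $[\chi] \in \Sigma$.

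First I would handle the easy cases: if $\gen{b}{M}$ commutes with any of $\gen{a}{K}, \gen{a}{L}, \genprod{a}{K}{L}$, then Fact A immediately yields $X \subseteq C_{\gen{b}{M}}$. By Lemma~\ref{Lemma:PCCases}, commutation with $\gen{a}{K}$ or $\gen{a}{L}$ fails only when both pairs $(\gen{a}{K}, \gen{b}{M})$ and $(\gen{a}{L}, \gen{b}{M})$ lie in non-commuting cases (2), (3), or (6). A short check using $K \cap L = \emptyset$ and $a \notin K \cup L$ leaves two surviving families: family (A), where $M \in \{K, L\}$ with $b$ in the other component; and family (B), where $a \in M$, at least one of $K \cap M = \emptyset$ or $L \cap M = \emptyset$ holds, and $b$ lies in at most one of $K$, $L$.

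In family (A), assume without loss of generality that $M = L$ and $b \in K$; then Theorem~\ref{Thm:Presentation}(2) gives $[\genprod{a}{K}{L}, \gen{b}{M}] = 1$ directly, and Fact A finishes. For family (B), relabel $K$ and $L$ if necessary so that $L \cap M = \emptyset$ and $b \notin L$. The hypotheses of Lemma~\ref{Lemma:NewPC} are then satisfied for the ordered pair $(\gen{b}{M}, \gen{a}{L})$, producing $\gen{b}{L} \in \partialconjugations$ together with the commutation $[\gen{b}{M}\gen{b}{L}, \gen{a}{L}] = 1$; Fact A yields $X = C_{\gen{a}{L}} \subseteq C_{\gen{b}{M}\gen{b}{L}}$. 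I would then show $X \subseteq C_{\gen{b}{L}}$ by sub-cases: when $b \in K$, Theorem~\ref{Thm:Presentation}(2) gives $[\genprod{a}{K}{L}, \gen{b}{L}] = 1$ and Fact A applies; when $b$ lies in neither $K$ nor $L$, I would instead introduce $\gen{b}{K}$ by applying Lemma~\ref{Lemma:NewPC} to $(\gen{b}{M}, \gen{a}{K})$, use case (5) of Lemma~\ref{Lemma:PCCases} to deduce $[\gen{a}{L}, \gen{b}{K}] = 1$, and substitute $\gen{b}{K}$ for $\gen{b}{L}$ throughout. Finally, since $\gen{b}{M}$ and $\gen{b}{L}$ commute (common acting letter $b$), Fact B applied to the decomposition $\gen{b}{M} = (\gen{b}{M}\gen{b}{L})(\gen{b}{L})^{-1}$ yields $X \subseteq C_{\gen{b}{M}\gen{b}{L}} \cap C_{\gen{b}{L}} \subseteq C_{\gen{b}{M}}$.

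The main obstacle is the bookkeeping in family (B): the choice of which auxiliary partial conjugation ($\gen{b}{K}$ or $\gen{b}{L}$) to produce via Lemma~\ref{Lemma:NewPC} depends on the sub-case, and for each auxiliary one must locate a commutation with an element sharing the axis $X$ to supply the second ingredient for Fact B. Conceptually, the hypothesis that $\genprod{a}{K}{L}$ is $\chi$-hyperbolic is essential: it supplies, via Theorem~\ref{Thm:Presentation}(2), the extra commutation relations needed precisely when neither $\gen{a}{K}$ nor $\gen{a}{L}$ alone commutes with $\gen{b}{M}$.
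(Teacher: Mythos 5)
Your proposal is correct and follows essentially the same argument as the paper: you take the common axis $X$ of the three commuting $\chi$-hyperbolic elements and, for an arbitrary partial conjugation not commuting with $\gen{a}{K}$ or $\gen{a}{L}$, run the same case analysis via Lemma~\ref{Lemma:PCCases} (your family (A) is the paper's case (6), your family (B) its case (3)), using Lemma~\ref{Lemma:NewPC}, relation (2) of Theorem~\ref{Thm:Presentation}, and Facts A and B exactly as the paper does.
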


\begin{proof}
Suppose $\gen{a}{K}, \gen{a}{L}$ and $\genprod{a}{K}{L}$ are $\chi$-hyperbolic.  Consider a $P\Sigma (A)$-action on an $\mathbb{R}$-tree $T$ that realizes $\chi$.
Let $X = C_{\gen{a}{K}} = C_{\gen{a}{L}} = C_{\genprod{a}{K}{L}}$.  Let $\gen{c}{M}$ be an arbitrary partial conjugation.  If
$[\gen{a}{K}, \gen{c}{M}] =1$ or $[\gen{a}{K}, \gen{c}{M}] = 1$, then $X \subseteq C_{\gen{c}{M}}$ by Fact A; thus we may assume
$[\gen{a}{K}, \gen{c}{M}] \neq 1$
and $[\gen{a}{K}, \gen{c}{M}] \neq 1$.  It follows that $d(a, c) = 2$.  Since $K \cap L = \emptyset$, we may assume without loss of generality that $c \not \in K$.  Since $d(a, c) = 2$ and $c \not \in K$ and $[\gen{a}{K}, \gen{c}{M}] \neq 1$, the pair $(\gen{a}{K}, \gen{c}{M})$ falls into
case (3) or (6) of Lemma \ref{Lemma:PCCases}.

First consider the case that $(\gen{a}{K}, \gen{c}{M})$ falls into
case (3).  Then $a \in M$.  By Lemma \ref{Lemma:NewPC}, $\gen{c}{K} \in \partialconjugations$ and $[\genprod{c}{K}{M}, \gen{a}{K}]=1$.  By Fact A, $X \subseteq C_{\genprod{c}{K}{M}}$.  If $c \in L$, then $[\genprod{a}{K}{L}, \gen{c}{K}]=1$ and $X \subseteq C_{\gen{c}{K}}$ by Fact A.  By Fact B, $X \subseteq C_{\gen{c}{M}}$.  If $c \not\in L$, then the pair $(\gen{a}{L},\gen{c}{K})$ falls into case (5) of Lemma~\ref{Lemma:PCCases} which implies $[\gen{a}{L},\gen{c}{K}]=1$.  By Fact A, $X \subseteq C_{\gen{c}{K}}$ which implies $X \subseteq C_{\gen{c}{M}}$ by Fact B.

Now consider the case that $(\gen{a}{K}, \gen{c}{M})$ falls into case (6).  Then $a \not \in M$, $c \not \in K$ and $M = K$. Since $M \cap L = K \cap L = \emptyset$ and $a \not \in M$ and $[\gen{a}{L}, \gen{c}{M}] \neq 1$, the pair $(\gen{a}{L}, \gen{c}{M})$ falls into case (3) of Lemma \ref{Lemma:PCCases}.  Thus $c \in L$.
Since $c \in L$ and $M =K$, $[\genprod{a}{K}{L}, \gen{c}{M}] =1$, and by Fact A, $X \subseteq C_{\gen{c}{M}}$.
\end{proof}

\begin{cor}\label{Cor:ZeroOneOrTwo}
If $[\chi] \in \Sigma^c$, then the following properties hold for each vertex $a \in V \setminus Z$:
\begin{enumerate}
\item There are at most two $\chi$-hyperbolic partial conjugations with acting letter $a$.
\item \label{Property:HypInnerMeansOne} The inner automorphism $\inner{a}$ is $\chi$-hyperbolic if and only if there is exactly one $\chi$-hyperbolic partial conjugation with acting letter $a$.
\item \label{Property:CancelOut} If $\gen{a}{K}$ and $\gen{a}{L}$ are distinct $\chi$-hyperbolic partial conjugations, then $\chi(\gen{a}{K}) = - \chi(\gen{a}{L})$.
\end{enumerate}
\end{cor}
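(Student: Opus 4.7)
The plan is to deduce all three properties in quick succession from Lemma~\ref{Lemma:AtMostTwo}, using the fact (recorded just before the lemma) that $\inner{a}$ equals the product of all partial conjugations with acting letter $a$, so that
$$\chi(\inner{a}) \;=\; \sum_{K} \chi(\gen{a}{K}),$$
where $K$ ranges over the connected components of $\Gamma \setminus \Star{a}$. The three properties are then essentially a bookkeeping exercise on this sum, combined with the contrapositive of Lemma~\ref{Lemma:AtMostTwo}.

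I would begin with (\ref{Property:CancelOut}), since the other two properties rest on it. Assume $[\chi] \in \Sigma^c$ and let $\gen{a}{K}$ and $\gen{a}{L}$ be distinct $\chi$-hyperbolic partial conjugations. By the contrapositive of Lemma~\ref{Lemma:AtMostTwo}, $\genprod{a}{K}{L}$ cannot also be $\chi$-hyperbolic. Since $\chi(\genprod{a}{K}{L}) = \chi(\gen{a}{K}) + \chi(\gen{a}{L})$ and $\genprod{a}{K}{L}$ is $\chi$-elliptic, this sum is $0$, giving $\chi(\gen{a}{K}) = -\chi(\gen{a}{L})$.

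Next, I would derive (1) from (\ref{Property:CancelOut}) by contradiction. If there were three distinct $\chi$-hyperbolic partial conjugations $\gen{a}{K}, \gen{a}{L}, \gen{a}{M}$ with acting letter $a$, applying (\ref{Property:CancelOut}) to each of the three pairs would yield
$$\chi(\gen{a}{K}) \;=\; -\chi(\gen{a}{L}) \;=\; \chi(\gen{a}{M}) \;=\; -\chi(\gen{a}{K}),$$
forcing $\chi(\gen{a}{K}) = 0$, which contradicts $\chi$-hyperbolicity.

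Finally, for (\ref{Property:HypInnerMeansOne}), I would use the displayed formula for $\chi(\inner{a})$ and split on the number of $\chi$-hyperbolic partial conjugations with acting letter $a$, which by (1) is $0$, $1$, or $2$. In the $0$ case every summand vanishes, and in the $2$ case property (\ref{Property:CancelOut}) causes the two nonzero summands to cancel, so in both cases $\chi(\inner{a}) = 0$; in the unique remaining case exactly one summand is nonzero, making $\chi(\inner{a}) \neq 0$. This gives the stated biconditional. I do not anticipate any real obstacle, as the whole statement is essentially a direct combinatorial consequence of Lemma~\ref{Lemma:AtMostTwo}; the mildest subtlety is simply ensuring that $\inner{a}$ genuinely equals the (unordered) product of partial conjugations so that $\chi$ is additive across it, which is guaranteed because pairs $(\gen{a}{K}, \gen{a}{L})$ fall into case~(1) of Lemma~\ref{Lemma:PCCases} and therefore commute.
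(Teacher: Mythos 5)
Your proof is correct and is essentially the argument the paper intends (the corollary is stated without proof precisely because it follows from Lemma~\ref{Lemma:AtMostTwo} and the identity $\chi(\inner{a})=\sum_K\chi(\gen{a}{K})$ exactly as you describe). The only superfluous point is your closing remark about commutativity: since $\chi$ is a homomorphism to the additive reals, $\chi$ is additive on any product regardless of whether the factors commute, so no appeal to case~(1) of Lemma~\ref{Lemma:PCCases} is needed.
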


\begin{lem}\label{Lemma:Inner}
Let $\gen{a}{K}, \gen{a}{L} \in \partialconjugations$ with $K \neq L$, and let $b \in V$. If $\gen{a}{K}, \gen{a}{L}$ and $\inner{b}$ are $\chi$-hyperbolic, then $[\chi] \in \Sigma$.
\end{lem}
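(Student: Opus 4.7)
The plan is to adapt the proof of Lemma \ref{Lemma:AtMostTwo}, with the $\chi$-hyperbolic inner automorphism $\inner{b}$ playing the role there played by the $\chi$-hyperbolic product $\genprod{a}{K}{L}$. If $\genprod{a}{K}{L}$ is itself $\chi$-hyperbolic, Lemma \ref{Lemma:AtMostTwo} already delivers $[\chi] \in \Sigma$, so I reduce to the case $\chi(\genprod{a}{K}{L}) = 0$, i.e., $\chi(\gen{a}{K}) = -\chi(\gen{a}{L})$.

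Given any $\mathbb{R}$-tree $T$ carrying a non-trivial abelian $P\Sigma(A)$-action associated to $\chi$, the elements $\gen{a}{K}$ and $\gen{a}{L}$ commute (shared acting letter) and are both $\chi$-hyperbolic, so Fact A yields a common axis $X = C_{\gen{a}{K}} = C_{\gen{a}{L}}$. Since $K \cap L = \emptyset$, swapping $K$ and $L$ if necessary I arrange $b \notin L$; then Corollary \ref{Cor:CommutingRuleForInners} gives $[\inner{b}, \gen{a}{L}] = 1$, and a second application of Fact A extends $X$ to be the axis of $\inner{b}$ as well.

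For an arbitrary partial conjugation $\gen{c}{M}$ I aim to show $X \subseteq C_{\gen{c}{M}}$. The straightforward cases are (i) $b \notin M$, in which case $[\inner{b}, \gen{c}{M}] = 1$ by Corollary \ref{Cor:CommutingRuleForInners} and Fact A applies via $\inner{b}$, and (ii) $\gen{c}{M}$ commuting with $\gen{a}{K}$ or $\gen{a}{L}$, again closed by Fact A. Otherwise $b \in M$ and $d(a,c) = 2$, and after relabelling (when this is compatible with $b \notin L$) I arrange $c \notin K$. The pair $(\gen{a}{K}, \gen{c}{M})$ then falls into case (3) or (6) of Lemma \ref{Lemma:PCCases}, and I replicate the argument of Lemma \ref{Lemma:AtMostTwo}: Lemma \ref{Lemma:NewPC} manufactures the auxiliary $\gen{c}{K}$ with $[\genprod{c}{M}{K}, \gen{a}{K}] = 1$, and Fact B reduces the task to establishing $X \subseteq C_{\gen{c}{K}}$. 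When $c \notin L$, the pair $(\gen{a}{L}, \gen{c}{K})$ lies in case (5) and commutes, so Fact A concludes; when $c \in L$ and $b \notin K$, I substitute $\inner{b}$ in the role played by $\genprod{a}{K}{L}$ in Lemma \ref{Lemma:AtMostTwo} and invoke $[\inner{b}, \gen{c}{K}] = 1$ together with Fact A.

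The main obstacle is the residual configuration $b \in K$ with $c \in L$: here $\inner{b}$ does not commute with $\gen{c}{K}$, and the swap that would arrange $b \notin K$ would simultaneously violate $b \notin L$. To treat this I plan to invoke the unique $\chi$-hyperbolic partial conjugation $\gen{b}{N}$ with acting letter $b$ afforded by Corollary \ref{Cor:ZeroOneOrTwo}(\ref{Property:HypInnerMeansOne}); since $\gen{b}{N}$ commutes with $\inner{b}$, it too has axis $X$. A finer case analysis on the relative positions of $N, K, L, M$, constrained by Lemma \ref{Lemma:TwoGens} and aided by further applications of Lemma \ref{Lemma:NewPC} to produce commutations between $\gen{b}{N}$ (or newly created partial conjugations) and $\gen{c}{K}$, should close every remaining configuration through Facts A and B.
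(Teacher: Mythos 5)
Your setup (common axis $X = C_{\gen{a}{K}} = C_{\gen{a}{L}} = C_{\inner{b}}$, then showing $X \subseteq C_{\gen{c}{M}}$ for an arbitrary partial conjugation via Facts A and B, Corollary~\ref{Cor:CommutingRuleForInners}, Lemma~\ref{Lemma:NewPC} and the cases of Lemma~\ref{Lemma:PCCases}) is essentially the paper's strategy, and your handling of the case-(3) configurations is correct. But the proof is not complete: the configuration you call ``the main obstacle'' is exactly where you stop proving and start hoping, and your sketch for it does not work as stated. Two specific problems. First, you misplace where the obstacle lives: if $(\gen{a}{K},\gen{c}{M})$ falls into case (3), then $K \cap M = \emptyset$, and since $b \in M$ (you are in the subcase where $\gen{c}{M}$ fails to commute with $\inner{b}$) this forces $b \notin K$; so in case (3) your substitution $[\inner{b},\gen{c}{K}]=1$ is always available and there is no residual configuration there. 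The genuine residual case is case (6), where $M = K$, $a \notin M$, $c \in L$ and $b \in M = K$ --- and there your template does not even parse: Lemma~\ref{Lemma:NewPC} needs $a \in M$, and $\genprod{c}{M}{K}$ is undefined because $M = K$. Second, for that remaining case your plan (``a finer case analysis on $N,K,L,M$ using the hyperbolic $\gen{b}{N}$, Lemma~\ref{Lemma:TwoGens} and further applications of Lemma~\ref{Lemma:NewPC} should close every configuration'') is a declaration of intent, not an argument; in particular the subcase in which $c$ lies in the domain $N$ of the hyperbolic $\gen{b}{N}$ (so that $(\gen{c}{M},\gen{b}{N})$ is in case (2) of Lemma~\ref{Lemma:PCCases} and Lemma~\ref{Lemma:NewPC} does not apply) is left with no concrete mechanism at all.

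The missing idea, which is how the paper closes this case, uses neither $\gen{b}{N}$ nor Lemma~\ref{Lemma:TwoGens}: let $M'$ be the connected component of $\Gamma \setminus \Star{c}$ containing $a$. Relation (\ref{DeltaRelation}) of Theorem~\ref{Thm:Presentation} gives $[\genprod{c}{M}{M'}, \gen{a}{K}] = 1$ (since $a \in M'$ and $\gen{a}{M} = \gen{a}{K}$ because $M = K$), so Fact A yields $X \subseteq C_{\genprod{c}{M}{M'}}$. Since $b \in M$ we have $b \notin M'$, so Corollary~\ref{Cor:CommutingRuleForInners} gives $[\inner{b}, \gen{c}{M'}] = 1$, Fact A yields $X \subseteq C_{\gen{c}{M'}}$, and Fact B then gives $X \subseteq C_{\gen{c}{M}}$. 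With this substituted for your final paragraph (and the case-(3)/case-(6) bookkeeping corrected as above), your argument becomes a complete proof along the same lines as the paper's; the preliminary reduction via Lemma~\ref{Lemma:AtMostTwo} is harmless but unnecessary.
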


\begin{proof}
Suppose $\gen{a}{K}, \gen{a}{L}$ and $\inner{b}$ are $\chi$-hyperbolic.  If $a = b$, then $[\chi] \in \Sigma$ by Corollary~\ref{Cor:ZeroOneOrTwo}(\ref{Property:CancelOut}). Thus we may assume $b \neq a$.
Let $T$ be an $\mathbb{R}$-tree equipped with a $P\Sigma(A)$-action that realizes $\chi$.
Let $X = C_{\gen{a}{K}} = C_{\gen{a}{L}}$.
Since $\inner{b}$ is $\chi$-hyperbolic, there exists a connected component $M$ of $\Gamma \setminus \Star{b}$ such that $\gen{b}{M}$ is $\chi$-hyperbolic.
If $b \not \in K$, then $$[\gen{a}{K}, \inner{b}]=[\inner{b}, \gen{b}{M}] = 1;$$
if $b \in K$, then $b \not \in L$ and $$[\gen{a}{L}, \inner{b}]=[\inner{b}, \gen{b}{M}] = 1;$$  in either case, Fact A yields
$$C_{\gen{a}{L}} = C_{\inner{b}} = C_{\gen{b}{M}} = X.$$

Let $\gen{c}{N}$ be an arbitrary partial conjugation.  The lemma is proved if we show $X \subseteq C_{\gen{c}{ N}}$, for then the $P\Sigma(A)$-action fixes
$X$ setwise and is therefore not exceptional.  If $\gen{c}{N}$ commutes with any of the automorphisms $\gen{a}{K}, \gen{a}{L}, \gen{b}{M}$ or $\inner{b}$, then $X \subseteq C_{\gen{c}{N}}$ by Fact A.  Thus we may
assume $\gen{c}{N}$ commutes with none of these automorphisms.  It follows that $d(a, c) = d(b, c) = 2$ and $b \in N$.  Since $K \cap L = \emptyset$, we may assume
without loss of generality that $c \not \in L$.
We now consider cases based on whether or not $N$ contains $a$.

First we consider the case $a \in N$.  Since $b \in N$ and $c \not \in L$ and $[\gen{c}{N}, \gen{b}{L}] \neq 1$, the pair $(\gen{c}{N}, \gen{b}{L})$ falls into case (3) of Lemma \ref{Lemma:PCCases}; thus $N \cap L = \emptyset$.  By Lemma \ref{Lemma:NewPC}, $\gen{c}{L}$ is a partial conjugation, and $[\gen{a}{L}, \genprod{c}{L}{N}] = 1$.  By Fact A, $X \subseteq C_{\genprod{c}{L}{N}}$.  Since $b \in N$, $b \not \in L$, and by Corollary \ref{Cor:CommutingRuleForInners}, $[\inner{b}, \gen{c}{L}] = 1$.  By Fact A, $X \subseteq C_{\gen{c}{L}}$.
By Fact B, $X \subseteq C_{\gen{c}{N}}$.

Next we consider the case $a \not\in N$.  Since $a \not\in N$ and $c \not\in L$ and $[\gen{a}{L}, \gen{c}{N}] \neq 1$, the pair $(\gen{a}{L}, \gen{c}{N})$ falls into case (6) of
Lemma \ref{Lemma:PCCases}; thus $N = L$.  Let $N^{\prime}$ be the component of $\Gamma \setminus \Star{c}$ such that $a \in N^{\prime}$.  Therefore, $[\gen{a}{L}, \genprod{c}{N}{N^{\prime}}]=1$.  By Fact A, $X \subseteq C_{\genprod{c}{N}{N^{\prime}}}$.
Since $b \in N$, $b \not \in N^{\prime}$, and by Corollary \ref{Cor:CommutingRuleForInners}, $[\inner{b}, \gen{c}{N^{\prime}}] = 1$.
By Fact A, $X \subseteq C_{\gen{c}{N^{\prime}}}$. By Fact B, $X \subseteq C_{\gen{c}{N}}$.
\end{proof}

\begin{cor}\label{Cor:TypeIandTypeII}
If $[\chi] \in \Sigma^c$, then exactly one of the following holds:
\begin{enumerate}
\item [(I)] For each vertex $a \in V \setminus Z$, there is at most one $\chi$-hyperbolic
partial conjugation with acting letter $a$.
\item [(II)] For each vertex $a \in V \setminus Z$, $\inner{a}$ is $\chi$-elliptic and there are either zero or two $\chi$-hyperbolic
partial conjugations with acting letter $a$.
\end{enumerate}
\end{cor}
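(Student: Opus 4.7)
The plan is to show that if $[\chi] \in \Sigma^c$ then at least one of (I) and (II) holds, and then to observe that the two cases are mutually exclusive for a non-trivial character, giving the ``exactly one'' conclusion.

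First, I would dispense with mutual exclusivity. If both (I) and (II) held simultaneously, then for every $a \in V \setminus Z$ the number of $\chi$-hyperbolic partial conjugations with acting letter $a$ would be at most one (by (I)) and in $\{0, 2\}$ (by (II)), hence zero. Since $\partialconjugations$ generates $P\Sigma(A)$, this would force $\chi = 0$, contradicting the standing assumption that $\chi$ is non-trivial. So (I) and (II) cannot both hold.

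Next, the main content: assuming $[\chi] \in \Sigma^c$, I would argue that failure of (I) forces (II). Suppose (I) fails, so some vertex $a \in V \setminus Z$ admits two distinct $\chi$-hyperbolic partial conjugations. By Corollary~\ref{Cor:ZeroOneOrTwo}(1) there are exactly two such partial conjugations, say $\gen{a}{K}$ and $\gen{a}{L}$ with $K \neq L$. Now fix any vertex $b \in V \setminus Z$. Lemma~\ref{Lemma:Inner} applied to $\gen{a}{K}, \gen{a}{L}$ and $\inner{b}$ shows that if $\inner{b}$ were $\chi$-hyperbolic we would have $[\chi] \in \Sigma$, a contradiction. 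Hence $\inner{b}$ is $\chi$-elliptic. By Corollary~\ref{Cor:ZeroOneOrTwo}(\ref{Property:HypInnerMeansOne}), the number of $\chi$-hyperbolic partial conjugations with acting letter $b$ is not equal to one, and by Corollary~\ref{Cor:ZeroOneOrTwo}(1) it is at most two; thus it lies in $\{0, 2\}$, and (II) holds at $b$. Since $b$ was arbitrary, (II) holds throughout.

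I do not anticipate a genuine obstacle: everything is a direct bookkeeping exercise against Corollary~\ref{Cor:ZeroOneOrTwo} and Lemma~\ref{Lemma:Inner}. The only mildly subtle point is remembering that ``exactly one'' in the statement requires checking that both conditions can never hold at once; that is where the non-triviality of $\chi$ and Laurence's generation result enter.
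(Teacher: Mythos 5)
Your proof is correct and is exactly the derivation the paper intends: the corollary is stated without proof because it follows immediately from Corollary~\ref{Cor:ZeroOneOrTwo} (at most two hyperbolic partial conjugations per acting letter, and $\inner{a}$ hyperbolic iff exactly one) together with Lemma~\ref{Lemma:Inner}, which is how you argue. Your explicit note that mutual exclusivity uses the non-triviality of $\chi$ and Laurence's generation result is a fine (and correct) piece of bookkeeping that the paper leaves implicit.
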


Motivated by the corollary above, we classify characters depending on which case, if any, they fall into.

\begin{defn}
We say $\chi$ is \emph{type I} if for each vertex $a \in V \setminus Z$, there is at most one $\chi$-hyperbolic
partial conjugation with acting letter $a$. We say $\chi$ is \emph{type II} if for each vertex $a \in V \setminus Z$, $\inner{a}$ is $\chi$-elliptic and there are either zero or two $\chi$-hyperbolic
partial conjugations with acting letter $a$.
\end{defn}

 \subsection{Characters of Type I}\label{typeI}

\begin{defn}[p-set]
A set of partial conjugations $\mathcal{Q} \subseteq \partialconjugations$ is a \emph{p-set} (or a \emph{partionable} set) if $\mathcal{Q}$ satisfies the following properties:
\begin{enumerate}
\item For each vertex $a \in V \setminus Z$, $\mathcal{Q}$ contains at most one partial conjugation with acting letter $a$.
\item The set $\mathcal{Q}$ admits a non-trivial partition $\{\mathcal{Q}_1, \mathcal{Q}_2\}$ with the property that $a \in L$ and $b \in K$ for each pair $(\gen{a}{K}, \gen{b}{L}) \in \mathcal{Q}_1 \times \mathcal{Q}_2$.
\end{enumerate}
We say $\{\mathcal{Q}_1, \mathcal{Q}_2\}$ is an \emph{admissible partition} of $\mathcal{Q}$.
\end{defn}

\begin{rem}
In the definition above, the first property is implied by the second.  In this instance we have preferred transparency to brevity.
\end{rem}

\begin{rem}\label{Remark:ConstructPSet}
An arbitrary maximal p-set $\mathcal{Q}$, and an admissible partition $\{\mathcal{Q}_1, \mathcal{Q}_2\}$ may be constructed as follows. Begin with a partial conjugation $\gen{a}{K}$.  Let $b_1, \dots, b_n$ be the vertices of $K$.  For $j = 1, \dots, n$, let
$L_j$ be the connected component of $\Gamma \setminus \Star{b_j}$ such that $a \in L_j$.  Let $a = a_1, a_2, \dots, a_m$ be the vertices of $\bigcap_{j=1}^n L_j \neq \emptyset.$
For $i = 1, 2, \dots, m$, let $K_i$ be the connected component of $\Gamma \setminus \Star{a_i}$ such that $b_1 \in K_i$.  Let
$$\mathcal{Q}_1 = \{\gen{a_1}{ K_1}, \dots, \gen{a_m}{ K_m}\}, \mathcal{Q}_2 = \{\gen{b_1}{ L_1}, \dots, \gen{b_n}{ L_n}\}, \text{ and } \mathcal{Q} = \mathcal{Q}_1 \cup \mathcal{Q}_2.$$
\end{rem}

\begin{prop}\label{Prop:ComplementIffPSet}
Suppose $\chi$ is type I and let $\mathcal{H}$ denote the set of $\chi$-hyperbolic partial conjugations.  Then $[\chi] \in \Sigma^c$ if and only if $\mathcal{H}$ is contained in some p-set $\mathcal{Q}$.
\end{prop}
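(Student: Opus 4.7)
The plan is to treat the two directions separately.

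For $(\Leftarrow)$, suppose $\mathcal{H} \subseteq \mathcal{Q}$ for a p-set $\mathcal{Q}$ with admissible partition $\{\mathcal{Q}_1, \mathcal{Q}_2\}$, and write $m = |\mathcal{Q}_1|$, $n = |\mathcal{Q}_2|$. I would construct an epimorphism $\phi\!:\! P\Sigma(A) \to \mathbb{Z}^m \ast \mathbb{Z}^n$ by sending each partial conjugation in $\mathcal{Q}_1$ to a distinct generator of the first free abelian factor, each in $\mathcal{Q}_2$ to a distinct generator of the second, and every other partial conjugation to $1$. Well-definedness amounts to verifying that $\phi$ kills every relation from Theorem~\ref{Thm:Presentation}. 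For the type~(1) commutator relations, any commuting pair both lying in $\mathcal{Q}$ must sit in the same part of the partition, since a cross-pair is in case (2) of Lemma~\ref{Lemma:PCCases}, which is incompatible with cases (1), (4), (5). For the type~(2) relation $[\gen{a}{K}\gen{a}{L}, \gen{b}{L}] = 1$, property~(1) of a p-set rules out both $\gen{a}{K}, \gen{a}{L}$ lying in $\mathcal{Q}$, and the remaining case analysis is closed off by the observation that $a \notin L$ (since $L \subseteq \Gamma \setminus \Star{a}$), which prevents the cross-part scenario. Because $\chi$ vanishes on every partial conjugation outside $\mathcal{H} \subseteq \mathcal{Q}$, a nontrivial character $\psi\!:\! \mathbb{Z}^m \ast \mathbb{Z}^n \to \mathbb{R}$ satisfying $\chi = \psi \circ \phi$ is obtained by setting $\psi$ to the appropriate $\chi$-value on each free-abelian generator. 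Corollary~\ref{Corollary:FreeProduct} then gives $[\psi] \in \Sigma^1(\mathbb{Z}^m \ast \mathbb{Z}^n)^c$, and Lemma~\ref{Lemma:Complement} concludes $[\chi] \in \Sigma^c$.

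For $(\Rightarrow)$, assume $\chi$ is type I and $[\chi] \in \Sigma^c$. Since $\chi$ is nontrivial and is determined on the generating set $\partialconjugations$, the set $\mathcal{H}$ is nonempty. Fix some $\gen{a}{K} \in \mathcal{H}$ and form the maximal p-set $\mathcal{Q}^*$ built from $\gen{a}{K}$ via Remark~\ref{Remark:ConstructPSet}. I would then show $\mathcal{H} \subseteq \mathcal{Q}^*$ by contradiction. Given $\gen{a'}{K'} \in \mathcal{H} \setminus \mathcal{Q}^*$, the plan is to fix an exceptional non-trivial abelian $P\Sigma(A)$-action on an $\mathbb{R}$-tree $T$ realizing $\chi$, set $X = C_{\gen{a}{K}}$, and prove $X \subseteq C_g$ for every partial conjugation $g \in \partialconjugations$. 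This would force the action to fix $X$ setwise, contradicting exceptionality.

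The main obstacle is this last step. Having ruled $\gen{a'}{K'}$ out of $\mathcal{Q}^*$, one must enumerate the possible configurations of the pair $(\gen{a}{K}, \gen{a'}{K'})$ permitted by Lemma~\ref{Lemma:PCCases}, and for each apply Facts A and B together with Lemma~\ref{Lemma:NewPC} (to manufacture auxiliary commuting partial conjugations) in order to chain the inclusion $X \subseteq C_g$ to every generator. The template is set by the proofs of Lemmas~\ref{Lemma:AtMostTwo} and~\ref{Lemma:Inner}, where similar chaining arguments extend the desired inclusion from a small distinguished set of hyperbolic elements to the entire generating set.
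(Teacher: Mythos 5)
Your $(\Leftarrow)$ direction is correct and is essentially the paper's argument: map $\mathcal{Q}_1$ and $\mathcal{Q}_2$ onto the two factors of $\mathbb{Z}^m \ast \mathbb{Z}^n$, kill everything else, check the relations of Theorem~\ref{Thm:Presentation} (your verification, using that a cross-pair of an admissible partition falls into case (2) of Lemma~\ref{Lemma:PCCases} and that $a \notin L$ in the relations of type (2), is fine), and invoke Lemma~\ref{Lemma:Complement} and Corollary~\ref{Corollary:FreeProduct}.

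The $(\Rightarrow)$ direction, however, has a genuine gap, and in fact its guiding claim is false. You fix an arbitrary $\gen{a}{K} \in \mathcal{H}$, build the maximal p-set $\mathcal{Q}^*$ of Remark~\ref{Remark:ConstructPSet} from it, and aim to prove $\mathcal{H} \subseteq \mathcal{Q}^*$. But a partial conjugation can lie in several maximal p-sets, and $\mathcal{H}$ may sit inside one other than $\mathcal{Q}^*$. Concretely, in Example~\ref{Example:RAAG} let $\chi$ send $\gen{a}{\{c,d,e\}}$ and $\gen{b}{\{d\}}$ to $1$ and all other generators to $0$. This $\chi$ is type I, and $\mathcal{H} = \{\gen{a}{\{c,d,e\}}, \gen{b}{\{d\}}\}$ is contained in the p-set $\mathcal{Q}_2$ of that example, so $[\chi] \in \Sigma^c$ by your own $(\Leftarrow)$ direction; yet Remark~\ref{Remark:ConstructPSet} applied to the element $\gen{a}{\{c,d,e\}}$ of $\mathcal{H}$ produces $\mathcal{Q}_1$, which does not contain $\gen{b}{\{d\}}$. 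So the contradiction you plan to extract from an element of $\mathcal{H} \setminus \mathcal{Q}^*$ does not exist: since $[\chi] \in \Sigma^c$, an exceptional action realizing $\chi$ really is available, and no chaining argument can show that every generator preserves the axis $X$. Moreover, even granting the target, the entire substance of the direction---the chain of applications of Facts A and B---is precisely what you defer as ``the main obstacle,'' so nothing is actually proved. The repair is the paper's route: argue the contrapositive, assuming no p-set contains $\mathcal{H}$. Because $\chi$ is type I, each $\inner{b}$ with $\gen{b}{L} \in \mathcal{H}$ is $\chi$-hyperbolic, and Corollary~\ref{Cor:CommutingRuleForInners} then shows that if some element of $\mathcal{H}$ failed to share the axis $X$ of $\gen{a}{K}$, the pair $(\mathcal{I}, \mathcal{H}\setminus\mathcal{I})$ would be an admissible partition making $\mathcal{H}$ a p-set---contradiction; and for a generator $\gen{c}{M}$ commuting with no element of $\mathcal{H}$ nor any $\inner{b}$, the failure of $\{\{\gen{c}{M}\}, \mathcal{H}\}$ to be admissible supplies $\gen{b}{L} \in \mathcal{H}$ with $b \in M$ and $c \notin L$, after which Lemma~\ref{Lemma:NewPC} together with Facts A and B yields $X \subseteq C_{\gen{c}{M}}$. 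The needed vertex-membership facts come from the failure of admissible partitions and from the hyperbolicity of the inner automorphisms, not from a case analysis of pairs inside $\mathcal{H}$ against a preselected maximal p-set.
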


\begin{proof}
Suppose $\chi$ is type I and $\mathcal{H}$ is contained in a p-set $\mathcal{Q}$.  Let $\{\mathcal{Q}_1, \mathcal{Q}_2\}$ be an admissible partition of $\mathcal{Q}$, with $$\mathcal{Q}_1 = \{\gen{a_1}{ K_1}, \dots, \gen{a_m}{ K_m}\} \text{ and } \mathcal{Q}_2 = \{\gen{b_1}{ L_1}, \dots, \gen{b_n}{ L_n}\}.$$  Let $G_1$ be the free abelian group with basis $\{u_1, \dots, u_m\}$, let $G_2$ be the free abelian group with basis $\{v_1, \dots, v_n\}$,
and let $G = G_1 \ast G_2$.  Consider a map such that: $\gen{a_i}{ K_i} \mapsto u_i$ for $i = 1 \dots, m$; $\gen{b_j}{ L_j} \mapsto v_j$ for $j = 1, \dots, n$; and all other partial conjugations are mapped to the identity.
It follows from Theorem \ref{Thm:Presentation} that this map determines an epimorphism $\phi\!:\! P\Sigma(A) \to G$.  Since $\chi$ factors through $\phi$, by Corollary~\ref{Corollary:FreeProduct}, $[\chi] \in \Sigma^c$.

Now suppose $\chi$ is type I and there is no p-set containing $\mathcal{H}$.  Let $T$ be an $\mathbb{R}$-tree equipped with a $P\Sigma(A)$-action that realizes $\chi$.
Let $\gen{a}{K} \in \mathcal{H}$, and let $X=C_{\gen{a}{K}}$. To prove the lemma it suffices to prove that $X \subseteq C_{\gen{c}{M}}$ for an arbitrary partial conjugation
$\gen{c}{M}$, because then we have that the action fixes $X$ setwise and hence is not exceptional.  If $\gen{c}{M}$ commutes with $\gen{a}{K}$, Fact A gives that $X \subseteq C_{\gen{c}{M}}$.  Thus we may assume that $\gen{c}{M}$ does not commute with $\gen{a}{K}$.

Next we show that the elements of $\mathcal{H}$ share the axis $X$.  Let $$\mathcal{I} = \{\gen{b}{L} \in \mathcal{H} \; | \; X \subseteq C_{\gen{b}{L}}\}.$$
Suppose $\mathcal{H} \neq \mathcal{I}$, and let $\gen{b}{L} \in \mathcal{H} \setminus I$. Since $X \not \nsubseteq C_{\gen{b}{L}}$, we have that $[\gen{b}{L}, \inner{a}] \neq 1$, and $[\gen{a}{K}, \inner{b}] \neq 1$.  By Corollary \ref{Cor:CommutingRuleForInners} we have $a \in L$ and $b \in K$.
It follows that $(\mathcal{I}, \mathcal{H} \setminus \mathcal{I})$ is an admissible partition, and $\mathcal{H}$ is a p-set---a contradiction which proves $\mathcal{H} =\mathcal{I}$.

Now consider an arbitrary partial conjugation such that $\gen{c}{M}$ does not commute with
$\gen{b}{L}$ or $\inner{b}$ whenever $\gen{b}{L} \in \mathcal{H}$.  It follows that $d(b, c) = 2$ for all $\gen{b}{K} \in \mathcal{H}$.  Since $\gen{c}{M}$ does not commute
with $\inner{b}$,  $b \in M$ for each $\gen{b}{L} \in \mathcal{H}$.
Since $\mathcal{H} \cup \{\gen{c}{M}\}$ is not a p-set, $\{\{\gen{c}{M}\}, \mathcal{H}\}$ is not an admissible partition.  Thus there exists $\gen{b}{L} \in \mathcal{H}$ such that $c \not \in L$.
Since $d(b, c) = 2$ and $b \in M$ and $c \not \in L$ and $[\gen{c}{M}, \gen{b}{L}] \neq 1$, Lemma \ref{Lemma:NewPC} gives that $\gen{c}{L}$ is a partial conjugation.  Since $[\genprod{c}{L}{M}, \gen{b}{L}]=1$, Fact A gives $X \subseteq C_{\genprod{c}{L}{M}}$.  By Corollary \ref{Cor:CommutingRuleForInners},  $[\gen{c}{L}, \inner{b}]=1$. By Fact A, $X \subseteq C_{\gen{c}{L}}$. By Fact B, $X \subseteq C_{\gen{c}{M}}$.
\end{proof}

 \subsection{Characters of Type II}\label{typeII}

\begin{defn}[$\delta$-p-set]
A set of partial conjugations $\mathcal{Q} \subseteq \partialconjugations$ is a \emph{$\delta$-p-set} if $\mathcal{Q}$ satisfies the following properties:
\begin{enumerate}
\item For each vertex $a \in V \setminus Z$, $\mathcal{Q}$ contains either zero or two partial conjugations with acting letter $a$.
\item The set $\mathcal{Q}$ admits a non-trivial partition $\{\mathcal{Q}_1, \mathcal{Q}_2\}$ such that $a \in L$ or $b \in K$ or $K = L$ for each pair $(\gen{a}{K}, \gen{b}{L}) \in \mathcal{Q}_1 \times \mathcal{Q}_2$.
\end{enumerate}
We say $\{\mathcal{Q}_1, \mathcal{Q}_2\}$ is an \emph{admissible $\delta$-partition} of $\mathcal{Q}$.
\end{defn}

\begin{rem}\label{Remark:EquivalentDelta}
It follows from the definitions that if $\gen{a}{K_1}, \gen{a}{K_{-1}} \in \mathcal{Q}$ and $K_1 \neq K_{-1}$, then either $\gen{a}{K_1}, \gen{a}{K_{-1}} \in \mathcal{Q}_1$ or
$\gen{a}{K_1}, \gen{a}{K_{-1}} \in \mathcal{Q}_2$.  Further, for each quadruple
$$(\gen{a}{K_1}, \gen{a}{K_{-1}}, \gen{b}{L_1}, \gen{b}{L_{-1}}) \in \mathcal{Q}_1 \times \mathcal{Q}_1 \times \mathcal{Q}_2 \times \mathcal{Q}_2,$$ $a \in L_i$ and $b \in K_j$ and $K_{-i} = L_{-j}$
for some $i, j \in \{-1, 1\}$.
\end{rem}

\begin{lem}\label{Lemma:PairsInC}
Let $\gen{a}{K_1}, \gen{a}{K_2}, \gen{b}{L_1}, \gen{b}{L_2} \in \partialconjugations$ be distinct partial conjugations.  Then
$[\gen{a}{K_i}, \gen{b}{L_j}] \neq 1$ for all $i, j \in \{1, 2\}$ if and only if $\{ \gen{a}{K_1}, \gen{a}{K_2},$ $\gen{b}{L_1}, \gen{b}{L_2} \}$ is a $\delta$-p-set.
\end{lem}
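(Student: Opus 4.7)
The plan is to prove both directions by case analysis using Lemma \ref{Lemma:PCCases} and Remark \ref{Remark:EquivalentDelta}.

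For the forward direction, I would observe that if all four pairs $(\gen{a}{K_i}, \gen{b}{L_j})$ are non-commuting, then by Lemma \ref{Lemma:PCCases} each falls into case (2), (3), or (6), each of which immediately satisfies the $\delta$-p-set condition ``$a \in L_j$ or $b \in K_i$ or $K_i = L_j$''. One checks $a \neq b$ at the outset: otherwise all four partial conjugations would share acting letter $a$, and two distinct partial conjugations with a common acting letter commute because their domains are disjoint components of $\Gamma \setminus \Star{a}$. It follows that $\mathcal{Q}_1 = \{\gen{a}{K_1}, \gen{a}{K_2}\}$, $\mathcal{Q}_2 = \{\gen{b}{L_1}, \gen{b}{L_2}\}$ is an admissible $\delta$-partition.

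For the backward direction, Remark \ref{Remark:EquivalentDelta} supplies the key structural input: it forces the two partial conjugations with acting letter $a$ into a common part (so without loss of generality $\mathcal{Q}_1 = \{\gen{a}{K_1}, \gen{a}{K_2}\}$ and $\mathcal{Q}_2 = \{\gen{b}{L_1}, \gen{b}{L_2}\}$) and, after a suitable relabeling of the $K_i$ and $L_j$, yields $a \in L_1$, $b \in K_1$, and $K_2 = L_2$ simultaneously. I would then run through each of the four pairs and rule out the commuting cases of Lemma \ref{Lemma:PCCases}. Case (1) is immediate from $d(a,b) = 2$, which is forced by $a \in L_1$; case (5) is immediate because in each pair at least one of the three $\delta$-p-set conditions produces a nonempty intersection of $\{a\} \cup K_i$ with $\{b\} \cup L_j$.

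The main obstacle is excluding case (4) for the two off-diagonal pairs $(\gen{a}{K_1}, \gen{b}{L_2})$ and $(\gen{a}{K_2}, \gen{b}{L_1})$. Here the identity $K_2 = L_2$ does the work: for the first pair the inclusion $\{a\} \cup K_1 \subset L_2 = K_2$ would force $a \in K_2$, contradicting $a \notin \Gamma \setminus \Star{a}$, while $\{b\} \cup L_2 \subset K_1$ would force the nonempty component $K_2 = L_2$ to sit inside the distinct component $K_1$, contradicting disjointness of distinct components of $\Gamma \setminus \Star{a}$. The second off-diagonal pair is handled symmetrically, using $b \in K_1$ and $K_2 = L_2$ together with the same disjointness principle for components of $\Gamma \setminus \Star{b}$. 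This exhausts all commuting possibilities and completes the proof.
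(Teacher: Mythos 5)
Your forward direction is correct and in fact a little slicker than the paper's: the paper relabels so that $a \notin L_2$ and $b \notin K_2$ and then pins down the exact configuration ($K_2 = L_2$, $a \in L_1$, $b \in K_1$) via three applications of Lemma~\ref{Lemma:PCCases}, whereas you simply observe that every non-commuting pair falls into case (2), (3) or (6), each of which directly satisfies the disjunction ``$a \in L$ or $b \in K$ or $K = L$'', so the partition into $\{\gen{a}{K_1}, \gen{a}{K_2}\}$ and $\{\gen{b}{L_1}, \gen{b}{L_2}\}$ is admissible. (Your preliminary check that $a \neq b$ is a point the paper leaves tacit.) For the converse, the paper merely asserts that it ``follows immediately from the definitions and Lemma~\ref{Lemma:PCCases}''; your route through Remark~\ref{Remark:EquivalentDelta}, extracting $a \in L_1$, $b \in K_1$, $K_2 = L_2$ and then excluding the commuting cases, is essentially the intended argument spelled out.

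There is, however, a small hole in your backward direction as written: you exclude case (1) and case (5) for all four pairs, but case (4) only for the two off-diagonal pairs, and then declare all commuting possibilities exhausted. Case (4) must also be ruled out for the diagonal pairs $(\gen{a}{K_1}, \gen{b}{L_1})$ and $(\gen{a}{K_2}, \gen{b}{L_2})$. The fix is one line each, in the same style as your off-diagonal argument: for the first pair, $\{a\} \cup K_1 \subset L_1$ would force $b \in L_1$ (since $b \in K_1$), which is impossible, and $\{b\} \cup L_1 \subset K_1$ would force $a \in K_1$, also impossible; for the second pair, $K_2 = L_2$ turns either inclusion into $a \in K_2$ or $b \in L_2$, both impossible. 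Alternatively, you can bypass the case-by-case exclusions entirely: the configuration $a \in L_1$, $b \in K_1$, $K_2 = L_2$ (with $d(a,b)=2$) shows each of the four pairs satisfies the defining conditions of case (2), (3) or (6), and since Lemma~\ref{Lemma:PCCases} asserts that exactly one of the six cases holds, no pair lies in the commuting cases (1), (4), (5). With that patch your proof is complete.
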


\begin{proof}
Assume $[\gen{a}{K_i}, \gen{b}{L_j}] \neq 1$ for all $i, j \in \{1, 2\}$.  Without loss of generality, assume $a \not \in L_2$ and $b \not \in K_2$.
Since $a \not \in L_2$ and $b \not \in K_2$ and $[\gen{a}{K_2}, \gen{b}{L_2}] \neq 1$, the pair $(\gen{a}{K_2}, \gen{b}{L_2})$ falls into case (6) of Lemma \ref{Lemma:PCCases}; thus $K_2 = L_2$.
Since $b \not \in K_2$ and $K_2 \cap L_1 = L_2 \cap L_1 = \emptyset$ and $[\gen{a}{K_2}, \gen{b}{L_1}] \neq 1$, the pair $(\gen{a}{K_2}, \gen{b}{L_1})$ falls into case (3) of Lemma \ref{Lemma:PCCases}; thus $a \in L_1$.
Since $a \not \in L_2$ and $K_1 \cap L_2 = K_1 \cap K_2 =\emptyset$ and $[\gen{a}{K_1}, \gen{b}{L_2}] \neq 1$, the pair $(\gen{a}{K_1}, \gen{b}{L_2})$ falls into case (3) of Lemma \ref{Lemma:PCCases}; thus $b \in K_1$.  Thus $\bigl\{\{ \gen{a}{K_1}, \gen{a}{K_2}\},  \{\gen{b}{L_1}, \gen{b}{L_2} \}\bigr\}$ is an admissible $\delta$-partition of $\{ \gen{a}{K_1}, \gen{a}{K_2}, \gen{b}{L_1}, \gen{b}{L_2} \}$.  The converse follows immediately from the definitions and Lemma \ref{Lemma:PCCases}.
\end{proof}

\begin{lem}\label{Lemma:HyperbolicPair}
Let $\gen{a}{K_1}, \gen{a}{K_2},\gen{c}{M}$ be distinct partial conjugations, and let $T$ be a $\mathbb{R}$-tree equipped with a $P\Sigma(A)$-action that realizes $\chi$.  If $\gen{a}{K_1}$ and $\gen{a}{K_2}$ are $\chi$-hyperbolic, $c \not \in K_1$ and $C_{\gen{a}{K_1}} \nsubseteq C_{\gen{c}{M}}$, then
$c \in K_2$ and $\gen{c}{K_1} \in \partialconjugations$.
\end{lem}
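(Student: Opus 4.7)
The plan is to trade the hypothesis $C_{\gen{a}{K_1}} \nsubseteq C_{\gen{c}{M}}$ for combinatorial information about the pair $(\gen{a}{K_1}, \gen{c}{M})$ via Lemma \ref{Lemma:PCCases}, and then use the second $\chi$-hyperbolic partial conjugation $\gen{a}{K_2}$ together with Lemma \ref{Lemma:NewPC} and Facts A and B to conclude. First I would note that $\gen{a}{K_1}$ and $\gen{a}{K_2}$ commute (both act by $a$, so case (1) of Lemma \ref{Lemma:PCCases} applies), and since both are $\chi$-hyperbolic, Fact A applied in each direction forces a common axis $X := C_{\gen{a}{K_1}} = C_{\gen{a}{K_2}}$. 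The contrapositive of Fact A applied to the hypothesis then shows that $\gen{c}{M}$ commutes with neither $\gen{a}{K_1}$ nor $\gen{a}{K_2}$. Feeding this non-commutativity together with $c \notin K_1$ into Lemma \ref{Lemma:PCCases} for the pair $(\gen{a}{K_1}, \gen{c}{M})$ leaves only cases (3) and (6).

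Case (6) is the easy one: here $M = K_1$, so $\gen{c}{K_1} = \gen{c}{M}$ is automatically a partial conjugation, and a second application of Lemma \ref{Lemma:PCCases} to the non-commuting pair $(\gen{a}{K_2}, \gen{c}{K_1})$ rules out every case except (3)---using $a \notin K_1$ and $K_1 \neq K_2$---which forces $c \in K_2$. In case (3) we have $a \in M$ and $K_1 \cap M = \emptyset$, and Lemma \ref{Lemma:NewPC} applied to $\gen{c}{M}$ and $\gen{a}{K_1}$ simultaneously produces $\gen{c}{K_1} \in \partialconjugations$ and the commutation $[\genprod{c}{M}{K_1}, \gen{a}{K_1}] = 1$; by Fact A the latter gives $X \subseteq C_{\genprod{c}{M}{K_1}}$.

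The main obstacle is establishing $c \in K_2$ inside case (3), and I would argue by contradiction. Assuming $c \notin K_2$, the pair $(\gen{a}{K_2}, \gen{c}{K_1})$ satisfies $a \neq c$, $a \notin K_1$, $c \notin K_2$, and $K_1 \cap K_2 = \emptyset$, placing it in case (5) of Lemma \ref{Lemma:PCCases}; in particular it commutes, and Fact A gives $X \subseteq C_{\gen{c}{K_1}}$. Since $\gen{c}{K_1}$ and $\genprod{c}{M}{K_1}$ lie in a common abelian subgroup of $P\Sigma(A)$, Fact B applied to $\genprod{c}{M}{K_1}$ and $\gen{c}{K_1}^{-1}$ then delivers $X \subseteq C_{\gen{c}{M}}$---since $\genprod{c}{M}{K_1}\cdot \gen{c}{K_1}^{-1} = \gen{c}{M}$---contradicting the hypothesis. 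The delicate point is precisely this step: one genuinely needs the second hyperbolic partial conjugation $\gen{a}{K_2}$ to expose the case-(5) commutation that powers the Fact B cancellation.
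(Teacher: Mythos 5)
Your proof is correct and follows essentially the same route as the paper's: the same reduction of $(\gen{a}{K_1},\gen{c}{M})$ to cases (3) and (6) of Lemma~\ref{Lemma:PCCases}, the same use of Lemma~\ref{Lemma:NewPC}, and the same Fact~A/Fact~B cancellation against $\gen{a}{K_2}$ to force $c \in K_2$. The only difference is presentational: where the paper tersely says ``Fact B cannot be used, so $[\gen{c}{K_1},\gen{a}{K_2}]\neq 1$,'' you run the same argument as an explicit contradiction through case (5), which makes the step more transparent.
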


\begin{proof}
Suppose $\gen{a}{K_1}$ and $\gen{a}{K_2}$ are $\chi$-hyperbolic and $c \not \in K_1$.  Let $T$ be an $\mathbb{R}$-tree equipped with a $P\Sigma(A)$-action that realizes $\chi$, and suppose $C_{\gen{a}{K_1}} \nsubseteq C_{\gen{c}{M}}$. It follows that $d(a, c) = 2$.

Since $c \not \in K_1$ and $[\gen{a}{K_1}, \gen{c}{M}] \neq 1$, the pair $(\gen{a}{K_1}, \gen{c}{M})$ falls into either case (3) or case (6) of Lemma \ref{Lemma:PCCases}.
If $(\gen{a}{K_1}, \gen{c}{M})$ falls into case (3), $a \in M$.  By Lemma \ref{Lemma:NewPC}, $\gen{c}{K_1} \in \partialconjugations$.  Since $[\genprod{c}{K_1}{M}, \gen{a}{K_1}] = 1$, but Fact B cannot be used, we must have that
$[\gen{c}{K_1}, \gen{a}{K_2}] \neq 1$; thus $(\gen{c}{K_1}, \gen{a}{K_2})$ falls into case (3) of Lemma \ref{Lemma:PCCases}, and $c \in K_2$.
If $(\gen{a}{K_1}, \gen{c}{M})$ falls into case (6), we have $a \not \in M$ and $M = K_1$.  But then since $a \not \in M$ and $M \cap K_2 = \emptyset$ and $[\gen{a}{K_2}, \gen{c}{M}] \neq 1$,
the pair $(\gen{a}{K_2}, \gen{c}{M})$ falls into case (3) of Lemma \ref{Lemma:PCCases}.  Thus $c \in K_2$.
\end{proof}

\begin{prop}\label{Prop:ComplementIffDelta}
Suppose $\chi$ is type II and let $\mathcal{H}$ denote the set of $\chi$-hyperbolic partial conjugations.  Then $[\chi] \in \Sigma^c$ if and only if $\mathcal{H}$ is contained in some $\delta$-p-set $\mathcal{Q}$.
\end{prop}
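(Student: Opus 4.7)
The plan is to follow the template of Proposition \ref{Prop:ComplementIffPSet} throughout, adapted to the paired structure of $\chi$-hyperbolic partial conjugations that Corollary \ref{Cor:ZeroOneOrTwo}(\ref{Property:CancelOut}) imposes in the type II setting.

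For the forward direction, assume $\mathcal{H}\subseteq\mathcal{Q}$ for a $\delta$-p-set with admissible $\delta$-partition $\{\mathcal{Q}_1,\mathcal{Q}_2\}$. Write $\mathcal{Q}_1 = \{\gen{a_i}{K_i^+},\gen{a_i}{K_i^-}\}_{i=1}^m$ and $\mathcal{Q}_2 = \{\gen{b_j}{L_j^+},\gen{b_j}{L_j^-}\}_{j=1}^n$, and construct an epimorphism $\phi\!:\!P\Sigma(A)\to\mathbb{Z}^m\ast\mathbb{Z}^n$ by $\gen{a_i}{K_i^\pm}\mapsto\pm u_i$, $\gen{b_j}{L_j^\pm}\mapsto\pm v_j$, and every other partial conjugation to the identity.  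Then $\chi$ factors through $\phi$ thanks to the identity $\chi(\gen{a_i}{K_i^+}) = -\chi(\gen{a_i}{K_i^-})$, and Corollary \ref{Corollary:FreeProduct} gives $[\chi]\in\Sigma^c$. Well-definedness reduces to checking the relations of Theorem \ref{Thm:Presentation}. Relation (1) is safe because no cross-partition pair commutes (the converse content of Lemma \ref{Lemma:PairsInC}, applied to the quadruple associated to any pair of acting letters, one drawn from each part). For relation (2), $\genprod{a}{K}{L}$ maps to $0$ whenever both factors lie in $\mathcal{Q}$; the only potentially delicate scenario is $\gen{a}{K}\in\mathcal{Q}$, $\gen{a}{L}\notin\mathcal{Q}$, and $\gen{b}{L}\in\mathcal{Q}_2$ with $b\in K$. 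Here Remark \ref{Remark:EquivalentDelta} places the partner $\gen{a}{K^*}$ in the same part $\mathcal{Q}_1$ as $\gen{a}{K}$, and then pair-admissibility of $(\gen{a}{K^*},\gen{b}{L})$ forces $L=K^*$ (disjointness of distinct components rules out $b\in K^*$, and $\gen{a}{L}\in\partialconjugations$ rules out $a\in L$), so $\gen{a}{L}\in\mathcal{Q}$ after all.

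For the backward direction, assume no $\delta$-p-set contains $\mathcal{H}$, fix an $\mathbb{R}$-tree $T$ with a non-trivial abelian $P\Sigma(A)$-action realizing $\chi$, pick $\gen{a}{K_1},\gen{a}{K_{-1}}\in\mathcal{H}$, and set $X = C_{\gen{a}{K_1}} = C_{\gen{a}{K_{-1}}}$ (common axis by Fact A, since these commute and are both hyperbolic). Mirroring the type I proof, I first verify $\mathcal{H}=\mathcal{I}$, where $\mathcal{I} = \{\gen{b}{L}\in\mathcal{H}: X\subseteq C_{\gen{b}{L}}\}$: if not, then for any $\gen{b}{L_1}\in\mathcal{H}\setminus\mathcal{I}$ with partner $\gen{b}{L_{-1}}$, no cross-pair in the quadruple with $\gen{a}{K_1},\gen{a}{K_{-1}}$ commutes (else Fact A would force $X$ into the axis of $\gen{b}{L_1}$), so Lemma \ref{Lemma:PairsInC} renders $(\mathcal{I},\mathcal{H}\setminus\mathcal{I})$ an admissible $\delta$-partition, a contradiction. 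For an arbitrary partial conjugation $\gen{c}{M}$: if $\gen{c}{M}$ commutes with any element of $\mathcal{H}$ then Fact A concludes; otherwise, after a WLOG swap $K_1\leftrightarrow K_{-1}$ so that $c\notin K_1$, Lemma \ref{Lemma:HyperbolicPair} supplies $c\in K_{-1}$ and $\gen{c}{K_1}\in\partialconjugations$, while the component $N$ of $\Gamma\setminus\Star{c}$ containing $a$ (existing because $d(a,c)=2$) supplies $\gen{c}{N}\in\partialconjugations$ with $N\neq K_1$. A short Lemma \ref{Lemma:PCCases} case analysis of $(\gen{a}{K_1},\gen{c}{M})$ then forces $M\in\{K_1,N\}$.

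The climax is to examine $\mathcal{Q}^* := \mathcal{H}\cup\{\gen{c}{K_1},\gen{c}{N}\}$ with putative partition $\{\mathcal{H},\{\gen{c}{K_1},\gen{c}{N}\}\}$. The pairs $(\gen{a}{K_i},\gen{c}{K_1})$ and $(\gen{a}{K_i},\gen{c}{N})$ satisfy admissibility directly (via $K_1=K_1$, $c\in K_{-1}$, or $a\in N$), so the only potentially failing admissibility clauses involve $\gen{d}{N_i}\in\mathcal{H}$ with $d\neq a$. If every such clause holds, $\mathcal{Q}^*$ is a $\delta$-p-set containing $\mathcal{H}$---contradiction. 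Otherwise Lemma \ref{Lemma:PCCases} shows that a failing clause can only arise from cases (1) or (5), which are exactly the commuting cases, so Fact A produces $X\subseteq C_{\gen{c}{K_1}}$ or $X\subseteq C_{\gen{c}{N}}$; then the relation $[\genprod{c}{N}{K_1},\gen{a}{K_1}]=1$ coming from Theorem \ref{Thm:Presentation}(\ref{DeltaRelation}) (valid because $a\in N$ and $N\neq K_1$) together with Fact A gives $X\subseteq C_{\genprod{c}{N}{K_1}}$, and a final application of Fact B to $((\gen{c}{K_1})^{-1},\genprod{c}{N}{K_1})$ or $((\gen{c}{N})^{-1},\genprod{c}{N}{K_1})$ transports $X$ into $C_{\gen{c}{M}}$, since $M$ is one of $K_1$ or $N$. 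The hardest part will be choosing the correct second member of the acting-letter-$c$ pair in $\mathcal{Q}^*$: using only the Lemma \ref{Lemma:HyperbolicPair} output $\gen{c}{K_1}$ leaves the subcase $M=K_1$ with a single-element pair, but the component $N$ containing $a$ both doubles up the acting letter $c$ and supplies the hypothesis $a\in N$ that drives the Theorem \ref{Thm:Presentation}(\ref{DeltaRelation}) chain.
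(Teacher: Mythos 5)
Your proposal is correct, and its skeleton matches the paper's: the easy direction factors $\chi$ through an epimorphism onto $G_1 \ast G_2$ and invokes Corollary~\ref{Corollary:FreeProduct}, and the hard direction fixes the common axis $X$ of a hyperbolic pair $\gen{a}{K_1}, \gen{a}{K_{-1}}$, proves $\mathcal{H}=\mathcal{I}$ exactly as the paper does (non-commuting cross-pairs plus Lemma~\ref{Lemma:PairsInC}), and then pushes $X$ into $C_{\gen{c}{M}}$ via Facts A and B and the relation of Theorem~\ref{Thm:Presentation}(\ref{DeltaRelation}). Where you genuinely diverge is the endgame. The paper lists $\mathcal{H}$ by all its acting letters $a_1,\dots,a_m$, applies Lemma~\ref{Lemma:HyperbolicPair} to each, and splits on whether $a_i\in M$ for all $i$ and on whether the components $L_i$ coincide, ultimately exhibiting the $\delta$-p-set $\mathcal{H}\cup\{\gen{b}{M},\gen{b}{L}\}$. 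You instead work only with the single letter $a$, pin down $M\in\{K_1,N\}$ from the case analysis of $(\gen{a}{K_1},\gen{c}{M})$, and test the one candidate $\mathcal{Q}^\ast=\mathcal{H}\cup\{\gen{c}{K_1},\gen{c}{N}\}$, using the clean dichotomy that a failing admissibility clause for a pair $(\gen{d}{P},\gen{c}{Q})$ with $d\notin Q$, $c\notin P$, $P\neq Q$ can only land in the commuting cases (1) or (5) of Lemma~\ref{Lemma:PCCases}, so Fact A applies. This buys a shorter and arguably tighter case analysis (it also quietly covers the configuration $M=L_i$ with $a_i\notin M$, which the paper's sentence ``if $a_i\notin M$ then $\gen{b}{M}$ commutes with $\gen{a_i}{L_i}$'' passes over), and your explicit verification of relation (2) of Theorem~\ref{Thm:Presentation} in the forward direction is more careful than the paper's one-line well-definedness remark.

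Two small repairs. First, in the relation (2) check your ``only delicate scenario'' misses its mirror image: $\gen{a}{K}\notin\mathcal{Q}$, $\gen{a}{L}\in\mathcal{Q}_1$, $\gen{b}{L}\in\mathcal{Q}_2$, $b\in K$; it is killed by the same move applied to the partner $\gen{a}{P'}$ of $\gen{a}{L}$, since $a\notin L$, $b\notin P'$ (as $b\in K$ and $K\neq P'$ because $\gen{a}{K}\notin\mathcal{Q}$), and $P'\neq L$, so admissibility of $(\gen{a}{P'},\gen{b}{L})$ fails, a contradiction. Second, do not cite Corollary~\ref{Cor:ZeroOneOrTwo}(\ref{Property:CancelOut}) for the identity $\chi(\gen{a_i}{K_i^+})=-\chi(\gen{a_i}{K_i^-})$ in the forward direction, since that corollary presupposes $[\chi]\in\Sigma^c$; the identity follows directly from $\chi$ being type II, because $\chi(\inner{a_i})=0$ and at most the two paired partial conjugations have nonzero $\chi$-value (and for pairs in $\mathcal{Q}\setminus\mathcal{H}$ both values are zero). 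Also state explicitly that you may assume $X\nsubseteq C_{\gen{c}{M}}$ before invoking Lemma~\ref{Lemma:HyperbolicPair}, since that containment is a hypothesis of the lemma.
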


\begin{proof}
Suppose $\mathcal{H}$ is contained in some $\delta$-p-set $\mathcal{Q}$.  Let $\{ \mathcal{Q}_1, \mathcal{Q}_2 \}$ be an admissible partition of $\mathcal{Q}$ with $$\mathcal{Q}_1 = \{ \gen{a_1}{K_1},\gen{a_1}{L_1}, \ldots, \gen{a_m}{K_m},\gen{a_m}{L_m} \} \text{ and } \mathcal{Q}_2 = \{ \gen{b_1}{M_1},\gen{b_1}{N_1}, \ldots, \gen{b_n}{M_n}, \gen{b_n}{N_n} \}.$$  Let $G_1$ be the free abelian group with basis $\{ u_1, \ldots, u_m \}$, $G_2$ be the free abelian group with basis $\{ v_1, \ldots, v_n \}$, and $G = G_1 \ast G_2$.  Define $\phi:P\Sigma(A) \to G$ by $\gen{a_i}{K_i} \mapsto u_i$ and $\gen{a_i}{L_i} \mapsto u_i^{-1}$ for $i = 1, \ldots, m$, $\gen{b_j}{M_j} \mapsto v_j$ and $\gen{b_j}{N_j} \mapsto v_j^{-1}$ for $j = 1, \ldots, n$, and all other generators map to the identity.  For $\gen{a_i}{K_i} \in \mathcal{Q}_1$ and $\gen{b_j}{M_j} \in \mathcal{Q}_2$, we have either $a_i \in M_j$ or $K_i = M_j$, and in either case, $[\gen{a_i}{ K_i},\gen{b_j}{M_j}] \neq 1$.  Thus, $\phi$ is a well-defined epimorphism.  Since $\chi$ factors through this map, by Corollary~\ref{Corollary:FreeProduct}, we have $[\chi] \in \Sigma^c$.

Suppose $\mathcal{H}$ is not contained in some $\delta$-p-set $\mathcal{Q}$.  Let $T$ be an $\mathbb{R}$-tree equipped with an $P\Sigma(A)$-action that realizes $\chi$.  Since $\chi$ is type II, we have $\pi_{a,K}, \pi_{a,L} \in \mathcal{H}$ for some vertex $a \in V \setminus Z$. Let $X = C_{\gen{a}{K}} = C_{\gen{a}{L}}$.

First we will show $X = C_{\gen{b}{M}}$ for each $\gen{b}{M} \in \mathcal{H}$.  Define $\mathcal{I} = \{ \gen{b}{M} \in \mathcal{H}~|~X = C_{\gen{b}{M}} \}$.  Assume $\mathcal{H} \neq \mathcal{I}$, and let $\gen{b}{M} \in \mathcal{H} \setminus \mathcal{I}$.  Since $\gen{b}{M} \in \mathcal{H}$, there exists $\gen{b}{N} \in \mathcal{H}$ where $M \neq N$, and clearly $\gen{b}{N} \in \mathcal{H} \setminus \mathcal{I}$.  Let $\gen{c}{Q} \in \mathcal{I}$.  Again, there must be $\gen{c}{R} \in \mathcal{I}$ such that $Q \neq R$.  By Lemma~\ref{Lemma:PairsInC}, $(\mathcal{I},\mathcal{H} \setminus \mathcal{I})$ is an admissible $\delta$-partition which is a contradiction, so $\mathcal{H} = \mathcal{I}$.

Now let $\gen{b}{M}$ be an arbitrary element of $\partialconjugations$, and let
$$\mathcal{H} = \{ \gen{a_1}{K_1}, \gen{a_1}{L_1}, \ldots, \gen{a_m}{K_m}, \gen{a_m}{L_m} \}.$$
By Lemma~\ref{Lemma:HyperbolicPair},
either $X \subseteq C_{\gen{b}{M}}$ or without loss of generality, $b \in K_i$ and $\gen{b}{L_i} \in \partialconjugations$ for each $i = 1, \ldots, m$.  Assume the latter is true, so either $a_i \not\in M$ for some $i \in \{1, \ldots, m \}$ or $a_i \in M$ for each $i \in \{ 1, \ldots, m \}$.  If $a_i \not\in M$, then $\gen{b}{M}$ commutes with $\gen{a_i}{L_i}$ which implies by Fact A that $X \subseteq C_{\gen{b}{M}}$.  Suppose for each $i = 1, \ldots, m$, $a_i \in M$.  If $L_i \cap L_j = \emptyset$ for some $i \neq j$, then $[\gen{b}{L_i},\gen{a_j}{L_j}]=1$ which implies $X \subseteq C_{\gen{b}{L_i}}$.  Since $a \in M$ and $b \not\in L_i$ and $L_i \cap M = \emptyset$, we have $[\genprod{b}{L_i}{M}, \gen{a_i}{L_i}]=1$.  By Fact A, $X \subseteq C_{\genprod{b}{L_i}{M}}$, and by Fact B, $X \subseteq C_{\gen{b}{M}}$.  Suppose $L_i \cap L_j \neq \emptyset$ for each pair $(i,j)$.  Then $L_i = L_j$ for each pair $(i,j)$ since these are connected components of $\Gamma \setminus st(b)$.  Denote by $L$ this connected component.  Then $(\{ \gen{b}{M}, \gen{b}{L} \}, \mathcal{H})$ is an admissible partition of the $\delta$-p-set $\mathcal{H} \cup \{ \gen{b}{M}, \gen{b}{L} \}$
which is a contradiction.  Therefore, $X \subseteq C_{\gen{b}{M}}$, and $[\chi] \in \Sigma$.
\end{proof}

Proposition~\ref{Prop:ComplementIffPSet} and Proposition~\ref{Prop:ComplementIffDelta} prove our first main theorem.

\setcounter{mainthm}{0}
\begin{mainthm}\label{Theorem:MainA}
Let $\chi:P\Sigma(A) \to \mathbb{R}$ be a character, and let $\mathcal{H}$ denote the set of $\chi$-hyperbolic partial conjugations.  Then $[\chi] \in \Sigma^c$ if and only if $\mathcal{H}$ is contained in a set of partial conjugations $\mathcal{Q}$ such that either:
\begin{enumerate}
\item The set $\mathcal{Q}$ admits a partition $\{\mathcal{Q}_1, \mathcal{Q}_2\}$ with the property that $a \in L$ and $b \in K$ for each pair $(\gen{a}{K}, \gen{b}{L}) \in \mathcal{Q}_1 \times \mathcal{Q}_2$; or
\item For each vertex $a \in V \setminus Z$, $\inner{a}$ is $\chi$-elliptic, and $\mathcal{Q}$ contains either zero or two partial conjugations with acting letter $a$; and $\mathcal{Q}$ admits a partition $\{\mathcal{Q}_1, \mathcal{Q}_2\}$ with the property that $a \in L$ or $b \in K$ or $K=L$ for each pair $(\gen{a}{K}, \gen{b}{L}) \in \mathcal{Q}_1 \times \mathcal{Q}_2$.
\end{enumerate}
\end{mainthm}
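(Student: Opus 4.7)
The plan is to obtain Theorem~\ref{Theorem:MainA} as a direct consequence of Propositions~\ref{Prop:ComplementIffPSet} and~\ref{Prop:ComplementIffDelta} together with the dichotomy recorded in Corollary~\ref{Cor:TypeIandTypeII}.

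For the necessity direction, I would take an arbitrary $[\chi] \in \Sigma^c$ and first invoke Corollary~\ref{Cor:TypeIandTypeII} to split into two exhaustive and disjoint cases. If $\chi$ is of type I, Proposition~\ref{Prop:ComplementIffPSet} gives a p-set $\mathcal{Q}$ containing $\mathcal{H}$, and the admissible partition of $\mathcal{Q}$ is precisely the partition required by clause (1) of the theorem. If $\chi$ is of type II, Proposition~\ref{Prop:ComplementIffDelta} gives a $\delta$-p-set $\mathcal{Q}$ containing $\mathcal{H}$, and combining its admissible $\delta$-partition with the defining property of type II (namely that every $\inner{a}$ is $\chi$-elliptic and exactly zero or two partial conjugations with acting letter $a$ are $\chi$-hyperbolic) yields clause (2). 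A minor bookkeeping point is that in clause (2) one may need to enlarge $\mathcal{Q}$ by pairs of partial conjugations that are not in $\mathcal{H}$ but have the same acting letter as an element of $\mathcal{H}$; this is harmless because $\mathcal{H} \subseteq \mathcal{Q}$ is all that is required, and the structural conditions on $\mathcal{Q}$ are inherited.

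For the sufficiency direction, I would mimic the epimorphism-to-a-free-product constructions already used in the reverse directions of the two propositions. Given $\mathcal{Q}$ satisfying clause (1) with admissible partition $\{\mathcal{Q}_1, \mathcal{Q}_2\}$, I define $\phi\!:\! P\Sigma(A) \to G_1 \ast G_2$ exactly as in Proposition~\ref{Prop:ComplementIffPSet}, sending the distinguished representative in $\mathcal{Q}_i$ to a basis element of the free abelian group $G_i$ and every other generator to the identity; Theorem~\ref{Thm:Presentation} guarantees that this is well-defined, since the only relations between partial conjugations fall under cases (1), (4), (5) of Lemma~\ref{Lemma:PCCases} or are of the form $[\genprod{a}{K}{L}, \gen{b}{L}] = 1$, all of which are satisfied after applying $\phi$. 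Given $\mathcal{Q}$ satisfying clause (2), I use the construction of Proposition~\ref{Prop:ComplementIffDelta}, sending the two generators with acting letter $a_i$ in $\mathcal{Q}_1$ to $u_i$ and $u_i^{-1}$ (and similarly for $\mathcal{Q}_2$); the condition that $\inner{a}$ is $\chi$-elliptic together with $\mathcal{H} \subseteq \mathcal{Q}$ forces $\chi(\gen{a_i}{K_i}) = -\chi(\gen{a_i}{L_i})$ (since the non-$\mathcal{Q}$ partial conjugations with acting letter $a_i$ are $\chi$-elliptic), so that $\chi$ factors through $\phi$. In both cases, Corollary~\ref{Corollary:FreeProduct} then yields $[\chi] \in \Sigma^c$.

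The main obstacle is not a deep one but rather the care required in the sufficiency direction to verify that the posited $\phi$ really is a well-defined homomorphism and that the elliptic/hyperbolic data of $\chi$ is compatible with the factorization through $\phi$. Once this is done, the two directions glue together with no further work, and the statement of Theorem~\ref{Theorem:MainA} is simply the amalgamation of the two propositions over the type~I/type~II dichotomy.
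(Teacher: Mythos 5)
Your proposal is correct and follows essentially the same route as the paper, whose entire proof of Theorem~\ref{Theorem:MainA} is the observation that Propositions~\ref{Prop:ComplementIffPSet} and~\ref{Prop:ComplementIffDelta}, combined via the type~I/type~II dichotomy of Corollary~\ref{Cor:TypeIandTypeII}, yield the statement. The extra bookkeeping you supply in the sufficiency direction (well-definedness of $\phi$ via Theorem~\ref{Thm:Presentation}, and that $\chi$-ellipticity of each $\inner{a}$ together with $\mathcal{H} \subseteq \mathcal{Q}$ forces $\chi(\gen{a_i}{K_i}) = -\chi(\gen{a_i}{L_i})$) is exactly what is implicit in the paper's two propositions, so nothing is missing.
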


\begin{exam}\label{Example:RAAG}
Let $A = \langle a, b, c, d, e~|~[a,b], [b,c], [c,d], [c,e] \rangle$.  The pure symmetric automorphism group $P\Sigma(A)$ is generated by the set $$\{ \gen{a}{\{c,d,e\}}, \gen{b}{\{d\}}, \gen{b}{\{e\}}, \gen{c}{\{a\}}, \gen{d}{\{a,b\}}, \gen{d}{\{e\}}, \gen{e}{\{a,b\}}, \gen{e}{\{d\}}\},$$ so $S(P\Sigma(A))$ is a $7$-dimensional sphere.  The maximal p-sets are:
\begin{enumerate}
\item $\mathcal{Q}_1 = \{ \gen{a}{\{c,d,e\}}, \gen{c}{\{a\}}, \gen{d}{\{a,b\}}, \gen{e}{\{a,b\}} \}$ with admissible partition $\{  \gen{a}{\{c,d,e\}} \}$ and $\{ \gen{c}{\{a\}}, \gen{d}{\{a,b\}}, \gen{e}{\{a,b\}} \}$,
\item $\mathcal{Q}_2 = \{ \gen{a}{\{c,d,e\}}, \gen{b}{\{d\}}, \gen{d}{\{a,b\}} \}$ with admissible partition $\{ \gen{a}{\{c,d,e\}}, \gen{b}{\{d\}} \}$ and $\{ \gen{d}{\{a,b\}} \}$,
\item $\mathcal{Q}_3 = \{ \gen{a}{\{c,d,e\}}, \gen{b}{\{e\}}, \gen{e}{\{a,b\}} \}$ with admissible partition $\{ \gen{a}{\{c,d,e\}}, \gen{b}{\{e\}} \}$ and $\{ \gen{e}{\{a,b\}} \}$, and
\item $\mathcal{Q}_4 = \{ \gen{d}{\{e\}}, \gen{e}{\{d\}}\}$
\end{enumerate}

The only maximal $\delta$-p-set is $\{ \gen{b}{\{d\}}, \gen{b}{\{e\}}, \gen{d}{\{a,b\}}, \gen{d}{\{e\}}, \gen{e}{\{a,b\}}, \gen{e}{\{d\}}\}$ with admissible partition $\{ \gen{b}{\{d\}}, \gen{b}{\{e\}} \}$ and $\{ \gen{d}{\{a,b\}}, \gen{d}{\{e\}}, \gen{e}{\{a,b\}}, \gen{e}{\{d\}}\}$.  Therefore, $\Sigma^c$ consists of the characters $[\chi]$ such that:
\begin{enumerate}
\item $\chi$ sends all generators to zero except maybe those generators in $\mathcal{Q}_i$ for some $1 \leq i \leq 4$, or
\item $\chi(\gen{b}{\{d\}}) = -(\gen{b}{\{e\}}), \chi(\gen{d}{\{a,b\}}) = -\chi(\gen{d}{\{e\}}), \chi(\gen{e}{\{a,b\}}) = -\chi(\gen{e}{\{d\}})$, and $\chi$ sends all other generators to zero.
\end{enumerate}
\end{exam}

 \section{Right-angled Artin groups with separating intersecting links}\label{sils}

A graph $\Gamma$ has a separating intersection of links (SIL) if there exists a pair $a, b$ of distinct non-adjacent vertices such that $\Gamma\setminus (\Link{a} \cap \Link{b})$ has a connected component $M$ containing neither $a$ nor $b$.
The following proposition was proven in \cite{charney}, and we state the result in terms of our particular circumstance.

\begin{prop}\label{Prop:NoSILs}\cite[Theorem 3.6]{charney}
If the defining graph $\Gamma$ contains no SILs, then $P\Sigma(A)$ is isomorphic to a right-angled Artin group.
\end{prop}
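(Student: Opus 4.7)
The plan is to leverage the presentation of $P\Sigma(A)$ from Theorem~\ref{Thm:Presentation}, which presents the group on the generating set $\partialconjugations$ subject to commuting relations (type~(1)) together with the $\delta$-relations $[\genprod{a}{K}{L},\gen{b}{L}]=1$ with $K \neq L$ and $b\in K$ (type~(2)). I would show that under the no-SIL hypothesis no type-(2) relation can even be formed, so $P\Sigma(A)$ is presented on $\partialconjugations$ purely by commutators, which is by definition a right-angled Artin group whose defining graph has vertex set $\partialconjugations$ and whose edges correspond to the commuting pairs identified in Lemma~\ref{Lemma:PCCases}.

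The key lemma to establish is: if there exist partial conjugations $\gen{a}{K},\gen{a}{L},\gen{b}{L}$ with $K \neq L$ and $b\in K$, then $\Gamma$ has a SIL. The outline is that $b \in K \subseteq V \setminus \Star{a}$ forces $d(a,b)=2$, and the simultaneous existence of $\gen{a}{L}$ and $\gen{b}{L}$ as partial conjugations says that $L$ is a connected component of both $\Gamma\setminus \Star{a}$ and $\Gamma \setminus \Star{b}$. Consequently $L \cap (\Link{a}\cup \Link{b}) = \emptyset$, so $L$ sits inside a single connected component $M$ of $\Gamma\setminus (\Link{a}\cap \Link{b})$, and I would argue that $a, b \notin M$, exhibiting $(a,b,M)$ as a SIL. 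The argument that $a \notin M$ goes by considering a hypothetical path from $a$ into $L$ in $\Gamma \setminus (\Link{a}\cap\Link{b})$, taking the last vertex $v$ on that path outside $L$, and observing that the edge from $v$ to $L$ forces $v \in \Star{a} \cap \Star{b}$ (since the external edges of $L$ land in $\Star{a}$ on one side and $\Star{b}$ on the other); using $d(a,b)=2$ and $L \cap (\Link{a} \cup \Link{b}) = \emptyset$, one then rules out $v=a$ and $v=b$, so $v \in \Link{a} \cap \Link{b}$, a contradiction.

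The main obstacle is precisely this path-tracing verification: one must be careful that the external vertex $v$ genuinely lies in both stars and is neither $a$ nor $b$. Once the key lemma is in hand, the proposition follows immediately, since the no-SIL hypothesis rules out every triple satisfying the hypothesis of a type-(2) relation, and Theorem~\ref{Thm:Presentation} reduces to a pure commutator presentation on $\partialconjugations$.
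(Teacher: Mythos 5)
Your proposal is correct, but note that the paper itself gives no proof of Proposition~\ref{Prop:NoSILs}: it is quoted from \cite[Theorem 3.6]{charney}, so your argument is a genuinely different, self-contained route built on Theorem~\ref{Thm:Presentation}. Your key lemma does hold, and the path-tracing step you flagged goes through: if $\gen{a}{K},\gen{a}{L},\gen{b}{L}\in\partialconjugations$ with $K\neq L$ and $b\in K$, then $d(a,b)=2$ and $L$ is simultaneously a connected component of $\Gamma\setminus\Star{a}$ and of $\Gamma\setminus\Star{b}$; hence any vertex $v\notin L$ adjacent to a vertex of $L$ lies in $\Star{a}\cap\Star{b}$, the possibilities $v=a$ and $v=b$ are excluded because $L\cap(\Link{a}\cup\Link{b})=\emptyset$, and since non-adjacency gives $\Star{a}\cap\Star{b}=\Link{a}\cap\Link{b}$, the last vertex outside $L$ on any path from $a$ or $b$ to $L$ within $\Gamma\setminus(\Link{a}\cap\Link{b})$ yields a contradiction; so the component $M$ of $\Gamma\setminus(\Link{a}\cap\Link{b})$ containing $L$ (which exists since $L$ is connected and misses $\Link{a}\cap\Link{b}$, and $a,b\notin L$) contains neither $a$ nor $b$, i.e.\ $(a,b,M)$ is a SIL. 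Consequently, with no SILs the presentation of Theorem~\ref{Thm:Presentation} has no relations of type~(2), and a presentation on $\partialconjugations$ whose relations are all commutators of generators is by definition the right-angled Artin group on the commutation graph of $\partialconjugations$ (edges given by cases (1), (4), (5) of Lemma~\ref{Lemma:PCCases}). What your route buys is that it stays entirely within the machinery the paper already assembles (Laurence's generation result plus Toinet's presentation) and it makes the dichotomy behind Theorem~\ref{Theorem:MainB} transparent: no SIL means no $\delta$-relations and a commutator-only presentation, while a SIL produces exactly the $\delta$-p-set exploited in Proposition~\ref{Prop:Converse}. What the paper's citation buys is independence from Theorem~\ref{Thm:Presentation}, on which your argument relies entirely.
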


In this section we prove the converse to Proposition \ref{Prop:NoSILs}, which completes the proof of Theorem \ref{Theorem:MainB}.  We continue to use the notation described above.

Given a non-trivial character $\psi:A \to \mathbb{R}$, we write $\Gamma_{\psi}$ for the full subgraph of $\Gamma$ spanned by the set of $\psi$-hyperbolic vertices.  The subgraph $\Gamma_\psi$ is called \emph{dominating} if every vertex in $\Gamma$ is either in, or adjacent to a vertex in, $\Gamma_{\psi}$.  It was shown in \cite{meier} that:

\begin{thm}\label{Theorem:SigmaRAAGs}\cite[Theorem 4.1]{meier}
Suppose $[\psi] \in S(A)$.  Then $[\psi] \in \Sigma^1(A)$ if and only if $\Gamma_{\psi}$ is connected and dominating.
\end{thm}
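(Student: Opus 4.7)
The plan is to prove both implications using the machinery of Section~\ref{BNS}: tree actions together with Facts A and B for showing membership in $\Sigma^1(A)$, and Lemma~\ref{Lemma:Complement} with Corollary~\ref{Corollary:FreeProduct} for showing membership in $\Sigma^1(A)^c$. This is the same playbook the paper uses to prove Theorem~\ref{Theorem:MainA}.

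For sufficiency, suppose $\Gamma_\psi$ is connected and dominating and consider an arbitrary non-trivial abelian $A$-action on an $\mathbb{R}$-tree $T$ realizing $\psi$. Pick a $\psi$-hyperbolic vertex $v_0$ and set $X := C_{v_0}$; I will show $X \subseteq C_v$ for every vertex $v \in V$, from which it follows that $X$ is fixed setwise and the action is not exceptional. For any $\psi$-hyperbolic $v$, connectedness of $\Gamma_\psi$ supplies a path $v_0 = u_0, u_1, \ldots, u_k = v$ of $\psi$-hyperbolic vertices with consecutive pairs adjacent in $\Gamma$ and therefore commuting in $A$; iterated application of Fact A yields $X = C_{u_0} = C_{u_1} = \cdots = C_{u_k}$. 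For any $\psi$-elliptic $v$, the dominating hypothesis provides a $\psi$-hyperbolic neighbour $w$, and $[v, w] = 1$ combined with Fact A gives $X = C_w \subseteq C_v$.

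For necessity I would argue the contrapositive in two cases. First, suppose $\Gamma_\psi$ is disconnected. Partition its vertex set as $U_1 \sqcup U_2$ with each $U_i$ a non-empty union of components, let $G_i$ be the free abelian group with basis indexed by $U_i$, and define $\phi: A \to G_1 \ast G_2$ by sending each $u \in U_i$ to its basis element and every $\psi$-elliptic vertex to the identity. The only defining relation of $A$ that could fail in the target is $[u, w] = 1$ with $u \in U_1$ and $w \in U_2$; but such a relation would force $u$ and $w$ to be adjacent in $\Gamma$ and both $\psi$-hyperbolic, hence adjacent in the full subgraph $\Gamma_\psi$, contradicting that they lie in different components of $\Gamma_\psi$. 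So $\phi$ is a well-defined epimorphism onto a free product of non-trivial groups, $\psi$ factors through it as $\bar\psi \circ \phi$ with $\bar\psi$ the obvious non-zero character on $G_1 \ast G_2$, and Corollary~\ref{Corollary:FreeProduct} gives $[\psi] \in \Sigma^1(A)^c$. Second, suppose $\Gamma_\psi$ is connected but not dominating. Pick a vertex $v^\ast$ that is $\psi$-elliptic and has no $\psi$-hyperbolic neighbour, let $G_1$ be the free abelian group with basis indexed by the vertices of $\Gamma_\psi$, and let $G_2 = \langle x \rangle \cong \mathbb{Z}$. Define $\phi: A \to G_1 \ast G_2$ by sending vertices of $\Gamma_\psi$ to their basis elements, $v^\ast$ to $x$, and all other $\psi$-elliptic vertices to the identity; well-definedness reduces to ruling out a relation $[v^\ast, u] = 1$ with $u \in \Gamma_\psi$, which is exactly the choice of $v^\ast$. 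Taking $\bar\psi$ to agree with $\psi$ on the basis of $G_1$ and to vanish on $x$ gives $\bar\psi \circ \phi = \psi$, and Corollary~\ref{Corollary:FreeProduct} applies once more.

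The sufficiency direction is essentially a routine Fact A argument once the right vertex is chosen. The delicate point in the necessity direction is the well-definedness of the quotient maps: it rests on $\Gamma_\psi$ being a full subgraph (so hyperbolic--hyperbolic adjacencies always live inside $\Gamma_\psi$) and, in the non-dominating case, on the careful selection of $v^\ast$. I would expect this bookkeeping --- identifying exactly which defining commutations survive in the free-product target --- to be the only real obstacle.
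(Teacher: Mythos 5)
Your argument is correct, but be aware that the paper contains no proof of this statement to compare against: Theorem~\ref{Theorem:SigmaRAAGs} is simply imported from \cite{meier}, so what you have written is a self-contained proof rather than a variant of one in the text. Your route is precisely the toolkit of \S~\ref{BNS} that the authors deploy for $P\Sigma(A)$ itself: for sufficiency, Brown's $\mathbb{R}$-tree formulation with Fact A, propagating the axis $X$ of one $\psi$-hyperbolic vertex along a path in the connected graph $\Gamma_\psi$ (two commuting hyperbolics share an axis) and then into the fixed sets of the remaining vertices via domination, so that $X$ is preserved setwise, both of its ends are fixed, and no action associated to $\psi$ is exceptional; for necessity, factoring $\psi$ through an epimorphism onto a free product of two non-trivial free abelian groups and invoking Corollary~\ref{Corollary:FreeProduct}, with well-definedness resting on $\Gamma_\psi$ being a \emph{full} subgraph in the disconnected case and on the choice of $v^\ast$ with no $\psi$-hyperbolic neighbour in the non-dominating case. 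Both verifications check out, including surjectivity and the factorization $\psi = \bar\psi \circ \phi$; the one step you leave implicit --- that $X \subseteq C_v$ for every generator $v$ already forces the whole group to fix the two ends of $X$ --- is the same step the paper uses without comment in Lemma~\ref{Lemma:AtMostTwo} and Proposition~\ref{Prop:ComplementIffPSet}, so it is acceptable at the paper's level of rigor (you could add one sentence noting that hyperbolic generators translate along $X$ and elliptic ones fix it pointwise). The original source establishes the theorem by different means, working from other formulations of $\Sigma^1$ rather than from Brown's tree characterization, so within the framework of this paper your derivation is the natural streamlined one, and it has the small added virtue of making the article logically self-contained.
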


For each set of vertices $U \subseteq V$, we write $S(U)$ for the sub-sphere
$$\{[\psi] \in S(A) \; | \; \psi(v) = 0 \text{ for all } v \in V \setminus U\}.$$
We note that $S(U)$ is a sub-sphere of dimension $|U|-1$ (we consider $S(\emptyset)$ to be a sub-sphere of dimension $-\!1$).  We say $S(U)$ is a \emph{missing sub-sphere} if $S(U) \subseteq \Sigma(A)^c$, and we note this holds exactly when the full subgraph spanned by $U$ is disconnected or non-dominating.  If $U$ spans a subgraph of $\Gamma$ which is non-dominating, then every subset of $U$ spans a subset of $\Gamma$ which is non-dominating; if $U$ spans a subgraph of $\Gamma$ which is disconnected, then every subset of $U$ spans a subset of $\Gamma$ which is disconnected or non-dominating.
It follows that if $S(U)$ and $S(W)$ are missing sub-spheres, then $S(U \cap W)$ is a missing sub-sphere.  It also follows that $\Sigma^1(A)$ is constructed from $S(A)$ by removing the maximal missing sub-spheres.  Viewing the construction of $\Sigma^1(A)$ in this distinctive way, we observe the following:

\begin{lem}\label{Lem:CountingDimensions}
If $A$ is a right-angled Artin group, and $S_1, \dots, S_p \subseteq S(A)$ are the maximal missing sub-spheres, then
\begin{multline*}
\rank(A/[A,A]) - \rank(Z(A))=  1 + \sum_{i} \dim(S_i) - \sum_{i < j} \dim(S_i \cap S_j)  \\ + \sum_{i<j<k} \dim(S_i \cap S_j \cap S_k) -  \dots+ (-1)^{n-1} \dim(S_1 \cap \dots \cap S_p).
\end{multline*}

\end{lem}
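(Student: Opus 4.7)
The plan is to show that the right-hand side simplifies, by inclusion--exclusion, to the cardinality $|U_1 \cup \cdots \cup U_p|$, where $U_i \subseteq V$ is the (unique) vertex set with $S_i = S(U_i)$, and then to identify this union with $V \setminus Z$. The left-hand side will be handled using the standard facts $\rank(A/[A,A]) = |V|$ and $\rank(Z(A)) = |Z|$.

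First I would use the elementary identities $S(U) \cap S(U') = S(U \cap U')$ and $\dim S(U) = |U|-1$ to rewrite every term on the right-hand side in terms of cardinalities of intersections of the $U_i$. Splitting each $|U_{i_1} \cap \cdots \cap U_{i_k}| - 1$ into its cardinality part and its constant part, the cardinality contributions collapse to $|U_1 \cup \cdots \cup U_p|$ by the usual inclusion--exclusion principle, while the constants contribute $-\sum_{k=1}^p (-1)^{k-1}\binom{p}{k} = -1$, which exactly cancels the leading $+1$. This step reduces the right-hand side to $|U_1 \cup \cdots \cup U_p|$.

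Next I would show $U_1 \cup \cdots \cup U_p = V \setminus Z$. If $z \in Z$, then $z$ is adjacent to every other vertex, so any $U$ with $z \in U$ spans a connected, dominating subgraph; by the criterion recalled from Meier, $S(U)$ is not missing, so $z$ lies in no $U_i$. Conversely, if $v \in V \setminus Z$, there exists $w \in V \setminus \Star{v}$, so $\{v,w\}$ spans a two-vertex edgeless subgraph (in particular disconnected), making $S(\{v,w\})$ missing; extending to a maximal missing sub-sphere produces an index $i$ with $v \in U_i$.

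Combining the previous two steps gives
$$\text{RHS} = |U_1 \cup \cdots \cup U_p| = |V \setminus Z| = |V| - |Z| = \rank(A/[A,A]) - \rank(Z(A)),$$
which is exactly the desired identity. The only real obstacle is correctly bookkeeping the two binomial sums in the inclusion--exclusion expansion and respecting the $-1$ shift between dimension and cardinality; the geometric content of the union step is essentially immediate from the connectivity/dominance criterion for missingness recalled in the text, and the rank computations are standard for RAAGs.
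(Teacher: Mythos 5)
Your proposal is correct and follows essentially the same route as the paper: compute the left-hand side as $|V|-|Z|$, convert dimensions of intersections of the $S_i$ into cardinalities of intersections of the $U_i$, collapse by inclusion--exclusion (with the binomial identity absorbing the shift by $1$), and identify $U_1\cup\dots\cup U_p$ with $V\setminus Z$. The only cosmetic difference is that the paper certifies $v\in V\setminus Z$ lies in some $U_i$ via the non-dominating singleton $\{v\}$, whereas you use the disconnected pair $\{v,w\}$; both are immediate from Meier's criterion.
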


\begin{proof}
Since $\rank(A/[A,A]) = |V|$, and $\rank(Z(A)) = |Z|$, the lemma is proved if we show that the right-hand side of the equation sums to $|V \setminus Z|$.  It follows from Theorem \ref{Theorem:SigmaRAAGs} that, for each $i$, $S_i = S(U_i)$ for some maximal set of vertices $U_i$ which spans a disconnected or non-dominating subgraph of $\Gamma$.  For each vertex $v \in V \setminus Z$, the singleton set $\{v\}$ spans a non-dominating subgraph of $\Gamma$, and hence $v$ is contained in at least one set $U_i$. Any set of vertices containing an element of $Z$ spans a connected and dominating subgraph of $\Gamma$.  Thus we have $V \setminus Z = U_1 \cup U_2 \cup \dots \cup U_p$.  Now the Principle of Inclusion-Exclusion, together with the identity $\sum_{i=1}^p (-\!1)^{i-1}{p \choose i} = 1$, gives:
\begin{eqnarray*}
&  & |U_1 \cup U_2 \cup \dots \cup U_p|\\ &&\\
& = & \sum_{i} |U_i| - \sum_{i < j} |U_i \cap U_j| + \sum_{i<j<k} |U_i \cap U_j \cap U_k| - \dots \\ 
&& \ldots + (-1)^{n-1} |U_1 \cap \dots \cap U_p|\\&&\\
& = & \sum_{i} \bigl(\dim(S_i)+1\bigr) - \sum_{i < j} \bigl(\dim(S_i \cap S_j)+1\bigr) \\
&& + \sum_{i<j<k} \bigl(\dim(S_i \cap S_j \cap S_k)+1\bigr) - \ldots \\
&& \dots+ (-1)^{n-1} \bigl(\dim(S_1 \cap \dots \cap S_p)+1\bigr)\\&&\\
&=& 1+ \sum_{i} \dim(S_i) - \sum_{i < j} \dim(S_i \cap S_j) \\
&& + \sum_{i<j<k} \dim(S_i \cap S_j \cap S_k) - \dots+ (-1)^{n-1} \dim(S_1 \cap \dots \cap S_p).
\end{eqnarray*}
\end{proof}

Next we characterize the maximal missing sub-spheres in $S(A)$ by a property which makes no reference to the canonical generating set of $A$, thereby allowing us to identify the only candidates for maximal missing sub-spheres in $S(G)$ when we do not yet know whether or not $G$
is a right-angled Artin group.

A normal subgroup $K$ in a finitely-generated group $G$ is a \emph{complement kernel} if $K = \ker(\psi)$ for some $[\psi] \in \Sigma(G)^c$.  For such $K$, the set $$\{[\psi] \in \Sigma^1(G)^c \; | \; K \subseteq \ker(\psi)\}$$ is the \emph{complement subspace determined by $K$}.

\begin{lem}
For each subset $S \subseteq S(A)$, $S$ is a maximal missing sub-sphere if and only if $S$ is the complement subspace determined by some minimal complement kernel $K$.
\end{lem}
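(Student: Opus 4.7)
The plan is to set up a bijection between maximal missing sub-spheres and minimal complement kernels via the assignment $S(U) \leftrightarrow \langle [A,A], V \setminus U\rangle$. Given a maximal missing sub-sphere $S = S(U)$, I would fix a character $\psi \in \mathrm{Hom}(A, \mathbb{R})$ with $\psi(v) = 0$ for $v \in V \setminus U$ and with $\{\psi(u)\}_{u \in U}$ a $\mathbb{Q}$-linearly independent subset of $\mathbb{R}$. Since $[\psi] \in S(U) \subseteq \Sigma^1(A)^c$, the subgroup $K := \ker(\psi)$ is a complement kernel, and factoring $\psi$ through $A/[A,A] = \mathbb{Z}^V$ shows that the linear independence forces $K$ to equal $\langle [A,A], V \setminus U\rangle$. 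Consequently $K \subseteq \ker(\psi')$ if and only if $\psi'$ vanishes on $V \setminus U$, so the complement subspace determined by $K$ is $S(U) \cap \Sigma^1(A)^c = S$.

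The main obstacle is verifying that $K$ is minimal. Suppose for contradiction that $K' \subsetneq K$ is another complement kernel, $K' = \ker(\psi'')$. If $\psi''$ vanished on all of $V \setminus U$, then $\langle [A,A], V \setminus U\rangle = K \subseteq K'$, contradicting $K' \subsetneq K$; on the other hand, for each $u \in U$ the inequality $\psi(u) \neq 0$ gives $u \notin K$, so $K' \subseteq K$ forces $\psi''(u) \neq 0$. Hence $\mathrm{supp}(\psi'')$ properly contains $U$. Since $\Sigma^1(A)^c$ is the union of maximal missing sub-spheres, some $S(U^*)$ contains $[\psi'']$, and then $U \subsetneq \mathrm{supp}(\psi'') \subseteq U^*$ yields $S(U) \subsetneq S(U^*)$, contradicting the maximality of $S(U)$.

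For the converse, suppose $K$ is a minimal complement kernel with $K = \ker(\psi)$ and $[\psi] \in \Sigma^1(A)^c$. Some maximal missing sub-sphere $S(U_0)$ contains $[\psi]$, so $\psi$ vanishes on $V \setminus U_0$ and hence $V \setminus U_0 \subseteq K$. Applying the construction of the first paragraph at $U_0$ produces a complement kernel $K'' = \langle [A,A], V \setminus U_0\rangle$ with $K'' \subseteq K$; minimality of $K$ then forces $K = K''$. The complement subspace computation from the first paragraph identifies the complement subspace of $K$ with $S(U_0)$, which is a maximal missing sub-sphere, completing the argument.
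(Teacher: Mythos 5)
Your proposal is correct and follows essentially the same route as the paper: you choose a character supported exactly on $U$ with $\mathbb{Q}$-linearly independent values (the paper uses $\psi_U(u_i)=\pi^i$), identify its kernel as the elements with zero exponent sum on $U$, and deduce minimality and the converse from Theorem~\ref{Theorem:SigmaRAAGs} via the fact that $\Sigma^1(A)^c$ is the union of the maximal missing sub-spheres. The only difference is that you spell out the minimality and converse arguments which the paper leaves as brief appeals to maximality and Theorem~\ref{Theorem:SigmaRAAGs}.
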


\begin{proof}
Suppose $S = S(U)$ is a maximal missing sub-sphere in $S(A)$, with $U = \{u_1, \dots, u_p\}$.  Let $\psi_U\!:\! A \to \mathbb{R}$ denote the character such that $$\psi_U(v) = 0 \text{ for } v \in V \setminus U, \text{ and } \psi_U(u_i) = \pi^i \text{ for } i = 1, \dots, p.$$
Since $\pi$ is transcendental, $K_U = \ker(\psi_U)$ consists of those elements $a \in A$ with zero exponent sums in each of the vertices $u_1, \dots, u_p$. It follows that $[\psi_U] \in S(U)$, and $K_U \subseteq \ker(\psi)$ for every $[\psi] \in S(U)$. Thus $S(U)$ is the complement subspace determined by $K_U$.
The maximality of $U$, together with Theorem \ref{Theorem:SigmaRAAGs}, implies that $K_U$ is minimal amongst the kernels of characters in $\Sigma^1(A)^c$.  It also follows from Theorem \ref{Theorem:SigmaRAAGs} that every minimal complement kernel arises in this way.
\end{proof}

We now have an approach for showing that a finitely-generated torsion-free group $G$ is not a right-angled Artin group: we identify the minimal complement kernels $K_1, \dots, K_p$ in $G$; use these to identify the corresponding complement subspaces $S_1, \dots, S_p$ in $S(G)$; then show that Lemma \ref{Lem:CountingDimensions} fails.  We carry out this plan for $P\Sigma(A)$ when $\Gamma$ contains a SIL.

\begin{lem}
If $S$ is the complement subspace corresponding to a minimal complement kernel $K$ in $P\Sigma(A)$, then either:
$$\displaystyle S = \{[\chi] \in S(P\Sigma(A)) \; | \; \chi(\gen{a}{K}) = 0 \text{ for all } \gen{a}{K} \in \partialconjugations \setminus \mathcal{Q}\}$$ for some maximal p-set $\mathcal{Q}$, in which case $\dim(S) = |\mathcal{Q}|-1$; or
$$\displaystyle S = \{[\chi] \in S(A) \; | \; \chi(\gen{a}{K}) = 0 \text{ for all } \gen{a}{K} \in \partialconjugations \setminus \mathcal{Q}, \text{ and } \chi(\inner{v}) = 0 \text{ for all } v \in V\}$$ for some maximal $\delta$-p-set $\mathcal{Q}$, in which case $\dim(S) = |\mathcal{Q}|/2-1$.
\end{lem}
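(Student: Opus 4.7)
The plan is to identify each minimal complement kernel $K$ explicitly as the kernel of a canonical projection of $P\Sigma(A)$ onto either $\mathbb{Z}^{\mathcal{Q}}$ (type~I) or $\mathbb{Z}^{|\mathcal{Q}|/2}$ (type~II), and then read off the complement subspace.  I would fix a character $\chi$ with $\ker(\chi)=K$ and $[\chi] \in \Sigma^c$, and split into cases according to Corollary~\ref{Cor:TypeIandTypeII}.  Since every relation in Theorem~\ref{Thm:Presentation} is a commutator, the abelianization of $P\Sigma(A)$ is free abelian on $\partialconjugations$, so a character is freely specified by its values on partial conjugations.

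In the type~I case, I would use Proposition~\ref{Prop:ComplementIffPSet} to obtain a maximal p-set $\mathcal{Q}$ containing $\mathcal{H}_\chi$, and let $\phi_{\mathcal{Q}}\!:\! P\Sigma(A) \to \mathbb{Z}^{\mathcal{Q}}$ be the projection sending each $\gen{a}{K}\in\mathcal{Q}$ to the corresponding basis vector and all other partial conjugations to zero.  A generic character $\psi_{\mathcal{Q}}$, obtained by composing $\phi_{\mathcal{Q}}$ with a map $\mathbb{Z}^{\mathcal{Q}} \to \mathbb{R}$ sending the basis to $\mathbb{Q}$-linearly independent transcendentals, is type~I with $\mathcal{H}_{\psi_{\mathcal{Q}}}=\mathcal{Q}$, so $[\psi_{\mathcal{Q}}]\in\Sigma^c$ by Proposition~\ref{Prop:ComplementIffPSet}, and hence $K_{\mathcal{Q}}:=\ker(\phi_{\mathcal{Q}})$ is a complement kernel.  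Since $\chi$ vanishes on partial conjugations outside $\mathcal{H}_\chi\subseteq\mathcal{Q}$, it factors through $\phi_{\mathcal{Q}}$, giving $K_{\mathcal{Q}}\subseteq K$; minimality of $K$ forces equality.  The complement subspace then consists of exactly those characters factoring through $\phi_{\mathcal{Q}}$, equivalently, those supported on $\mathcal{Q}$, each of which lies in $\Sigma^c$ by the same application of Proposition~\ref{Prop:ComplementIffPSet}; this is a sub-sphere of dimension $|\mathcal{Q}|-1$.

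In the type~II case, I would run the analogous argument with a maximal $\delta$-p-set $\mathcal{Q}$ (furnished by Proposition~\ref{Prop:ComplementIffDelta}) and the canonical projection $\phi^{\delta}_{\mathcal{Q}}\!:\! P\Sigma(A) \to \mathbb{Z}^{|\mathcal{Q}|/2}$ sending each pair $\gen{a}{K_1},\gen{a}{K_2}\in\mathcal{Q}$ to $\pm e_a$ and all other partial conjugations to zero.  The main obstacle is verifying that $\chi$ factors through $\phi^{\delta}_{\mathcal{Q}}$: for each acting letter $a$ the type~II hypothesis gives $\chi(\inner{a})=0$, and because a $\delta$-p-set contains at most two partial conjugations per acting letter, every partial conjugation with acting letter $a$ outside the pair $\gen{a}{K_1},\gen{a}{K_2}$ lies in $\partialconjugations\setminus\mathcal{Q}$ and hence has zero $\chi$-value (since $\mathcal{H}_\chi\subseteq\mathcal{Q}$); this yields the required cancellation $\chi(\gen{a}{K_1})+\chi(\gen{a}{K_2})=0$.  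The minimality argument then proceeds as before, and the complement subspace consists of those characters supported on $\mathcal{Q}$ satisfying $\chi(\inner{v})=0$ for all $v\in V$.  The pairwise cancellations cut the $|\mathcal{Q}|$-dimensional support space in half, so this is a sub-sphere of dimension $|\mathcal{Q}|/2-1$, and each such character is type~II with $\mathcal{H}_\psi\subseteq\mathcal{Q}$, so lies in $\Sigma^c$ by Proposition~\ref{Prop:ComplementIffDelta}.
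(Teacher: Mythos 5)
Your argument is correct and follows essentially the same route as the paper's proof: the type I/type II dichotomy of Corollary~\ref{Cor:TypeIandTypeII}, Propositions~\ref{Prop:ComplementIffPSet} and~\ref{Prop:ComplementIffDelta}, and minimality of $K$. You merely make explicit the steps the paper leaves as ``follows immediately'' --- notably the generic character realizing $\ker(\phi_{\mathcal{Q}})$ as a complement kernel and the cancellation $\chi(\gen{a}{K_1})+\chi(\gen{a}{K_2})=0$ deduced from $\chi(\inner{a})=0$ in the type II case.
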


\begin{proof}
Suppose $S$ is the complement subspace corresponding to a minimal complement kernel $K$ in $P\Sigma(A)$, and let $\chi\!:\! P\Sigma(A) \to \mathbb{R}$ be a character with kernel $K$.  By Corollary \ref{Cor:TypeIandTypeII}, $\chi$ is type I or type II.

Consider first the case that $\chi$ is type I.  By Proposition \ref{Prop:ComplementIffPSet}, the $\chi$-hyperbolic vertices comprise a p-set $\mathcal{Q}$.  The minimality of $K$ implies that $\mathcal{Q}$ is not contained in a larger p-set. That $S$ is as described follows immediately.

Now consider the case that $\chi$ is type II.  By Proposition \ref{Prop:ComplementIffDelta}, the $\chi$-hyperbolic vertices comprise a $\delta$-p-set $\mathcal{Q}$.  The minimality of $K$ implies that $\mathcal{Q}$ is not contained in a larger $\delta$-p-set. That $S$ is as described follows immediately.

\end{proof}

\begin{lem}\label{Lem:CountingInPSigmaA}
If $\mathcal{Q}_1, \dots, \mathcal{Q}_p$ are the maximal p-sets in $P\Sigma(A)$, and $S_1, \dots, S_p$ the corresponding complement subspaces, then
\begin{multline*}
\rank(P\Sigma(A)/[P\Sigma(A), P\Sigma(A)])=  1 + \sum_{i} \dim(S_i) - \sum_{i < j} \dim(S_i \cap S_j)  \\ + \sum_{i<j<k} \dim(S_i \cap S_j \cap S_k) -  \dots+ (-1)^{n-1} \dim(S_1 \cap \dots \cap S_p).
\end{multline*}
\end{lem}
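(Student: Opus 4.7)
The plan is to imitate the proof of Lemma~\ref{Lem:CountingDimensions} almost verbatim, replacing vertices by partial conjugations and maximal non-dominating/disconnected vertex sets by maximal p-sets. First I would compute $\rank(P\Sigma(A)/[P\Sigma(A),P\Sigma(A)])$: by Theorem~\ref{Thm:Presentation} every defining relation among the generators $\partialconjugations$ is a commutator, so the abelianization is free abelian of rank $|\partialconjugations|$. Thus the target rank is exactly $|\partialconjugations|$, and the character sphere $S(P\Sigma(A))$ has dimension $|\partialconjugations|-1$.

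Second, I would use the preceding (unnumbered) lemma to read off dimensions of the relevant complement subspaces: for a maximal p-set $\mathcal{Q}_i$, the complement subspace $S_i$ consists of classes of characters supported on $\mathcal{Q}_i$, so $S_i$ is the sub-sphere cut out by the vanishing of all coordinates outside $\mathcal{Q}_i$ and hence $\dim(S_i)=|\mathcal{Q}_i|-1$. The same description makes intersections transparent: $S_{i_1} \cap \dots \cap S_{i_k}$ is the sub-sphere of characters supported on $\mathcal{Q}_{i_1} \cap \dots \cap \mathcal{Q}_{i_k}$, so $\dim(S_{i_1} \cap \dots \cap S_{i_k}) = |\mathcal{Q}_{i_1} \cap \dots \cap \mathcal{Q}_{i_k}|-1$ (with the convention that an empty intersection contributes dimension $-1$).

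Third, I would show $\mathcal{Q}_1 \cup \dots \cup \mathcal{Q}_p = \partialconjugations$; this is the key combinatorial step, and I expect it to be the main obstacle, though it is not deep. Given any $\gen{a}{K} \in \partialconjugations$, pick a vertex $b \in K$; then $d(a,b)=2$ so $a \in V \setminus \Star{b}$, and I let $L$ denote the connected component of $\Gamma \setminus \Star{b}$ containing $a$. One then verifies directly that $\{ \gen{a}{K}, \gen{b}{L} \}$ with partition $\bigl\{\{\gen{a}{K}\}, \{\gen{b}{L}\}\bigr\}$ satisfies the defining conditions of a p-set (this is the base case of the construction in Remark~\ref{Remark:ConstructPSet}). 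Since every p-set is contained in a maximal p-set, $\gen{a}{K}$ lies in some $\mathcal{Q}_i$.

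Finally, I would apply the ordinary Principle of Inclusion-Exclusion:
\begin{equation*}
|\partialconjugations| = |\mathcal{Q}_1 \cup \dots \cup \mathcal{Q}_p| = \sum_{k=1}^{p} (-1)^{k-1} \!\!\sum_{i_1<\dots<i_k} |\mathcal{Q}_{i_1} \cap \dots \cap \mathcal{Q}_{i_k}|,
\end{equation*}
substitute $|\mathcal{Q}_{i_1} \cap \dots \cap \mathcal{Q}_{i_k}| = \dim(S_{i_1} \cap \dots \cap S_{i_k})+1$, and absorb the constants via $\sum_{k=1}^{p}(-1)^{k-1}\binom{p}{k} = 1$. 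Collecting terms gives precisely the claimed identity, completing the proof.
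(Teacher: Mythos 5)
Your proposal is correct and follows essentially the same route as the paper: compute $\rank$ of the abelianization as $|\partialconjugations|$ from Theorem~\ref{Thm:Presentation}, show every partial conjugation $\gen{a}{K}$ lies in a p-set by pairing it with $\gen{b}{L}$ for $b \in K$ and $L$ the component of $\Gamma \setminus \Star{b}$ containing $a$, and then apply Inclusion-Exclusion with the identity $\sum_{i=1}^{p}(-1)^{i-1}\binom{p}{i}=1$, converting cardinalities to dimensions of the complement sub-spheres. No gaps to report.
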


\begin{proof}
It follows from Theorem \ref{Thm:Presentation} that $\rank\bigl(P\Sigma(A)/[P\Sigma(A), P\Sigma(A)]\bigr) = |\partialconjugations|$.  Suppose $\gen{a}{K} \in \partialconjugations$.  Let $b$ be a vertex in $K$, and let $L$ be the connected component of $\Gamma \setminus \Star{b}$ such that $a \in L$.  Then  $\{\gen{a}{K}, \gen{b}{L}\}$ is a p-set.  Thus every partial conjugation is contained in at least one p-set.  Now, as in the proof of Lemma \ref{Lem:CountingDimensions}, the lemma follows from the Principle of Inclusion-Exclusion and the identity $\sum_{i=1}^p (-\!1)^{i-1}{p \choose i} = 1$.
\end{proof}

\begin{cor}\label{Cor:NoTypeII}
If $P\Sigma(A)$ is isomorphic to a right-angled Artin group, then $\Sigma^1(P\Sigma(A))^c$ contains no characters of type II.
\end{cor}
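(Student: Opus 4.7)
The plan is to derive a contradiction from the hypothesis that $P\Sigma(A)$ is a right-angled Artin group yet $\Sigma^1(P\Sigma(A))^c$ contains a type II character. Suppose $P\Sigma(A) \cong A'$, a RAAG with canonical vertex set $V'$, so that $\rank(P\Sigma(A)^{ab}) = |V'|$. By the lemma immediately preceding this corollary, the maximal missing sub-spheres of $S(P\Sigma(A))$ are precisely the complement subspaces $S_1,\dots,S_p$ of the maximal p-sets $\mathcal{Q}_1,\dots,\mathcal{Q}_p$ together with the complement subspaces $T_1,\dots,T_q$ of the maximal $\delta$-p-sets $\mathcal{D}_1,\dots,\mathcal{D}_q$; the type II hypothesis forces $q\geq 1$. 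In the RAAG $A'$ each maximal missing sub-sphere has the form $S(U')$ for a unique $U'\subseteq V'$ (see the discussion following Theorem~\ref{Theorem:SigmaRAAGs}), so write $S_i=S(U_i')$ and $T_j=S(U_{p+j}')$; matching dimensions gives $|U_i'|=|\mathcal{Q}_i|$ and $|U_{p+j}'|=|\mathcal{D}_j|/2$.

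The crucial step is the geometric observation that $S_i\cap T_j=\emptyset$ for every $i$ and $j$. Any $\chi\in S_i\cap T_j$ is supported on $\mathcal{Q}_i\cap\mathcal{D}_j$, and since $\mathcal{Q}_i$ is a p-set, this support contains at most one partial conjugation per acting letter. For each $\gen{v}{K}\in\mathcal{Q}_i\cap\mathcal{D}_j$ the constraint $\chi(\inner{v})=0$ built into the $\delta$-p-set subspace $T_j$ reduces to $\chi(\gen{v}{K})=0$, since $\chi$ already vanishes on every other partial conjugation with acting letter $v$. Hence $\chi\equiv 0$, and translating back to the RAAG picture yields $U_i'\cap U_{p+j}'=\emptyset$ for all $i\leq p$ and $j\leq q$.

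By the inclusion-exclusion identity used in the proof of Lemma~\ref{Lem:CountingDimensions}, the alternating sum appearing in Lemma~\ref{Lem:CountingInPSigmaA} equals $|\bigcup_{i=1}^p U_i'|$; combining this with Lemma~\ref{Lem:CountingInPSigmaA} itself,
\[
\Bigl|\bigcup_{i=1}^p U_i'\Bigr| \;=\; \rank(P\Sigma(A)^{ab}) \;=\; |V'|,
\]
and since $\bigcup_{i=1}^p U_i'\subseteq V'$, we obtain $\bigcup_{i=1}^p U_i'=V'$. Now fix any $j\in\{1,\dots,q\}$: the disjointness established above gives $U_{p+j}'\cap\bigcup_i U_i'=\emptyset$, while $U_{p+j}'\subseteq V'=\bigcup_i U_i'$, forcing $U_{p+j}'=\emptyset$. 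This contradicts $|U_{p+j}'|=|\mathcal{D}_j|/2\geq 2$, where the latter inequality follows from Remark~\ref{Remark:EquivalentDelta}: each half of an admissible partition of $\mathcal{D}_j$ must contain a complete pair of partial conjugations sharing an acting letter, so each half has size at least~$2$.

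The main obstacle is the disjointness $S_i\cap T_j=\emptyset$, which relies on the interplay between the at-most-one-PC-per-acting-letter restriction for p-sets and the inner-automorphism constraint defining $\delta$-p-set subspaces; once that is in place, the cardinality count within $V'$ immediately rules out any $\delta$-p-set vertex support, and there is no need to invoke Lemma~\ref{Lem:CountingDimensions} beyond reusing its inclusion-exclusion computation.
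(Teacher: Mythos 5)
Your proof is correct and is essentially the paper's own argument: both rest on the classification of complement subspaces of minimal complement kernels together with Lemma~\ref{Lem:CountingInPSigmaA} and the inclusion-exclusion computation behind Lemma~\ref{Lem:CountingDimensions}, concluding that the maximal p-set subspaces already exhaust the rank of the abelianization, so no $\delta$-p-set subspace (hence no type II character) can occur in $\Sigma^1(P\Sigma(A))^c$. Your explicit disjointness step $S_i \cap T_j = \emptyset$ simply unpacks the deduction the paper compresses into the assertion that $S_1, \dots, S_p$ is the complete list of complement subspaces corresponding to minimal complement kernels.
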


\begin{proof}
Suppose $P\Sigma(A)$ is isomorphic to a right-angled Artin group, and assume the notation of Lemma \ref{Lem:CountingInPSigmaA}.  It follows from Theorem \ref{Lem:CountingDimensions} and
Lemma \ref{Lem:CountingInPSigmaA}, that $S_1, \dots, S_p$ is the complete list of complement subspaces corresponding to minimal complement kernels (and $P\Sigma(A)$ has no center).  Thus $S_1, \dots, S_p$ is the complete list of maximal missing sub-spheres in $S(P\Sigma(A))$, and
$$\Sigma^1(P\Sigma(A))^c = \bigcup_{i=1}^p S_i.$$
Since each character in each $S_i$ is type I, and by Corollary \ref{Cor:TypeIandTypeII} no character is type I and type II, we conclude that $\Sigma^1(P\Sigma(A))^c$ contains no characters of type II.
\end{proof}


\begin{prop}\label{Prop:Converse}
If $\Gamma$ contains a SIL, then $P\Sigma(A)$ is not isomorphic to a right-angled Artin group.
\end{prop}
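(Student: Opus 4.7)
The plan is to apply the contrapositive of Corollary \ref{Cor:NoTypeII}: it suffices to construct a type II character $\chi$ with $[\chi] \in \Sigma^c$. By Proposition \ref{Prop:ComplementIffDelta}, this amounts to producing a $\delta$-p-set whose cancellation structure supports a type II assignment of values on generators.

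First I would translate the SIL data into partial conjugations. Given non-adjacent vertices $a, b$ and a component $M$ of $\Gamma \setminus (\Link{a} \cap \Link{b})$ containing neither, let $K$ denote the component of $\Gamma \setminus \Star{a}$ containing $M$, and $L$ the component of $\Gamma \setminus \Star{b}$ containing $M$. Every path in $\Gamma \setminus \Star{a}$ is also a path in $\Gamma \setminus (\Link{a} \cap \Link{b})$, so the fact that $b$ lies outside the component $M$ of the latter graph forces $b \notin K$; symmetrically $a \notin L$. Since $M \subseteq K \cap L$ is nonempty, $d(a,b) = 2$, and $b \notin K$, Lemma \ref{Lemma:TwoGens} applied in both directions yields $K = L$. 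Moreover, because $b \in V \setminus \Star{a}$ lies outside $K$, the component $K'$ of $\Gamma \setminus \Star{a}$ containing $b$ is distinct from $K$; similarly a distinct component $L'$ of $\Gamma \setminus \Star{b}$ contains $a$.

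Next I would set $\mathcal{Q} = \{\gen{a}{K}, \gen{a}{K'}, \gen{b}{K}, \gen{b}{L'}\}$ and verify it is a $\delta$-p-set. Checking each cross-pair against Lemma \ref{Lemma:PCCases}: $(\gen{a}{K}, \gen{b}{K})$ falls in case (6); $(\gen{a}{K}, \gen{b}{L'})$ and $(\gen{a}{K'}, \gen{b}{K})$ fall in case (3); and $(\gen{a}{K'}, \gen{b}{L'})$ falls in case (2). None of these is a commuting case, so Lemma \ref{Lemma:PairsInC} certifies $\mathcal{Q}$ as a $\delta$-p-set with admissible partition $\{\{\gen{a}{K}, \gen{a}{K'}\}, \{\gen{b}{K}, \gen{b}{L'}\}\}$. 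Now define $\chi: P\Sigma(A) \to \mathbb{R}$ by $\chi(\gen{a}{K}) = \chi(\gen{b}{K}) = 1$, $\chi(\gen{a}{K'}) = \chi(\gen{b}{L'}) = -1$, and $\chi = 0$ on every other partial conjugation. Every relation in Theorem \ref{Thm:Presentation} is a commutator relation, so the abelianization of $P\Sigma(A)$ is free abelian on $\partialconjugations$ and $\chi$ is a well-defined character. The inner automorphism $\inner{v}$ is $\chi$-elliptic for every $v$ (the two hyperbolic contributions cancel when $v \in \{a, b\}$ and the sum is empty otherwise), and each acting letter carries zero or two $\chi$-hyperbolic partial conjugations, so $\chi$ is type II. The set of $\chi$-hyperbolic partial conjugations is exactly $\mathcal{Q}$, so Proposition \ref{Prop:ComplementIffDelta} places $[\chi]$ in $\Sigma^c$, and Corollary \ref{Cor:NoTypeII} yields the conclusion.

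The principal obstacle is the opening graph-theoretic step: extracting the equality $K = L$ from the SIL hypothesis so that the critical pair $(\gen{a}{K}, \gen{b}{K})$ lands in case (6) of Lemma \ref{Lemma:PCCases}. Once $K = L$ and the auxiliary components $K'$, $L'$ are in hand, the verifications of the $\delta$-p-set structure and of type II cancellation are routine.
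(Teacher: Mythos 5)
Your proposal is correct and follows essentially the same route as the paper: extract from the SIL a $\delta$-p-set of the form $\{\gen{a}{K},\gen{a}{K'},\gen{b}{K},\gen{b}{L'}\}$ (your common component $K=L$ is in fact the SIL component $M$ itself, which is the shared component the paper uses directly), then note that a type II character supported on it lies in $\Sigma^c$ by Proposition~\ref{Prop:ComplementIffDelta} and conclude via Corollary~\ref{Cor:NoTypeII}. The only differences are presentational: you derive the shared component via Lemma~\ref{Lemma:TwoGens} and write down the cancelling character explicitly, steps the paper leaves implicit.
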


\begin{proof}

Suppose $\Gamma$ contains a SIL.  Let $a, b$ and $M$ be as in the definition of a SIL, let $K$ be the connected component of $\Gamma \setminus \Star{a}$ that contains $b$, and let $L$ be the connected component of $\Gamma \setminus \Star{b}$ that contains $a$.  The set $\{\gen{a}{K}, \gen{a}{M}, \gen{b}{L}, \gen{b}{M}\}$ is a $\delta$-p-set.  In particular, $\Sigma^1(P\Sigma(A))$ contains at least one character of type II.  By Corollary \ref{Cor:NoTypeII}, $P\Sigma(A)$ is not isomorphic to a right-angled Artin group.
\end{proof}

Proposition~\ref{Prop:Converse} and \cite[Theorem 3.6]{charney} prove Theorem~\ref{Theorem:MainB}.

\bibliography{sigma-psa-raags}
\bibliographystyle{amsalpha}
\end{document}